\documentclass[twoside, 12pt]{article}
\usepackage{amssymb, amsmath, mathrsfs, amsthm}%, wasysym}
\usepackage{graphicx}
\usepackage{color}
\usepackage{cite}
\usepackage[top=2cm, bottom=2cm, left=2cm, right=2cm]{geometry}
\usepackage{float, caption, subcaption}

\DeclareGraphicsRule{*}{eps}{*}{}
\input{epsf}

\newcommand{\bd}{\begin{description}}
\newcommand{\ed}{\end{description}}
\newcommand{\bi}{\begin{itemize}}
\newcommand{\ei}{\end{itemize}}
\newcommand{\be}{\begin{enumerate}}
\newcommand{\ee}{\end{enumerate}}
\newcommand{\beq}{\begin{equation}}
\newcommand{\eeq}{\end{equation}}
\newcommand{\beqs}{\begin{eqnarray*}}
\newcommand{\eeqs}{\end{eqnarray*}}

\definecolor{DarkGreen}{rgb}{0.2, 0.6, 0.3}

%%%%%%%%%%%%%%%%%%%%%%%%%%%%%%%%%%%%%%%%%%%%%%%%%%%%%%%%%%

%\newcommand{\labelz}[1]{\label{#1}}
%%%%%%%%%%%%%%%%%%%%%%%%%%%%%%%%%%%%%%%%%%%%%%%%%%%%%%%%%%

% Specific macros

\newtheorem{theorem}{Theorem}[section]

\newtheorem{corollary}[theorem]{Corollary}

\newtheorem{claim}{Claim}[theorem]
\newtheorem{fact}{Fact}

\newtheorem{proposition}{Proposition}[section]

\newtheorem{observation}{Observation}[section]
\setcounter{case}{0} \setcounter{claim}{0}

\begin{document}
\title{\textbf{Fractional matching preclusion number of graphs} \footnote{Supported by the National
Science Foundation of China (Nos. 11601254, 11551001, 11161037,
61763041, 11661068, and 11461054) and the Science Found of Qinghai
Province (Nos.  2016-ZJ-948Q, and 2014-ZJ-907) and the  Qinghai Key
Laboratory of Internet of Things Project (2017-ZJ-Y21).} }

\author{
Jinyu Zou\footnote{School of Computer Science, Qinghai Normal
University, Xining, Qinghai 810008, China. {\tt
zjydjy2015@126.com}}, \ \ Yaping Mao\footnote{Corresponding author}
\footnote{School of Mathematics and Statistis, Qinghai Normal
University, Xining, Qinghai 810008, China. {\tt
maoyaping@ymail.com}} \footnote{Academy of Plateau Science and
Sustainability, Xining, Qinghai 810008, China}, \ \ Zhao
Wang\footnote{College of Science, China Jiliang University, Hangzhou
310018, China. {\tt wangzhao@mail.bnu.edu.cn}}, \ \ Eddie Cheng
\footnote{Department of Mathematics and Statistics, Oakland
University, Rochester, MI USA 48309 {\tt echeng@oakland.edu}}}
\date{}
\maketitle

\begin{abstract}
The \emph{fractional matching preclusion number} of a graph $G$, denoted by
$fmp(G)$, is the minimum number of edges whose deletion results in a
graph that has no fractional perfect matchings. In this paper, we first give some sharp upper and lower
bounds of fractional matching preclusion number. Next, graphs with large and
small fractional matching preclusion number are characterized, respectively. In
the end,
we investigate some extremal problems on fractional matching preclusion number. \\[2mm]
{\bf Keywords:} Interconnection networks; fractional perfect
matching;
fractional matching number; extremal problem\\[2mm]
{\bf AMS subject classification 2010:} 05C40; 05C05; 05C76.
\end{abstract}

\section{Introduction}

All graphs considered in this paper are undirected, finite and
simple. We refer to the book \cite{Bondy} for graph theoretical
notation and terminology not described here. For a graph $G$, let
$V(G)$, $E(G)$, and $\overline{G}$ denote the set of vertices, the
set of edges, and the complement of $G$, respectively. The number of
vertices in $G$ is the \emph{order} of $G$. For any subset $X$ of
$V(G)$, let $G[X]$ denote the subgraph induced by $X$; similarly,
for any subset $F$ of $E(G)$, let $G[F]$ denote the subgraph induced
by $F$. Let $X\subseteq V(G)\cup E(G)$. We use $G-X$ to denote the
subgraph of $G$ obtained by removing all the vertices in $X$
together with the edges incident with them from $G$ as well as
removing all the edges in $X$ from $G$. If $X=\{x\}$, we may write
$G-x$ instead of $G-\{x\}$. For two subsets $X$ and $Y$ of $V(G)$ we
denote by $E_G[X,Y]$ the set of edges of $G$ with one end in $X$ and
the other end in $Y$. The {\it degree}\index{degree} of a vertex $v$
in a graph $G$, denoted by $deg_G(v)$, is the number of edges of $G$
incident with $v$. Let $\delta(G)$ and $\Delta(G)$ be the minimum
degree and maximum degree of the vertices of $G$, respectively. The
set of neighbors of a vertex $v$ in a graph $G$ is denoted by
$N_G(v)$. A graph is \emph{Hamiltonian} if it contains a Hamiltonian
cycle. A component of a graph is \emph{odd} or \emph{even} according
to whether it has an odd or even number of vertices.

\subsection{Matching preclusion}

A \emph{matching} in a graph is a set of edges such that every
vertex is incident with at most one edge in this set. If a set of
edges form a matching in a graph, they are \emph{independent}. A
\emph{perfect matching} in a graph is a set of edges such that every
vertex is incident with exactly one edge in this set. An
\emph{almost-perfect matching} in a graph is a set of edges such
that every vertex, except one, is incident with exactly one edge in
this set, and the exceptional vertex is incident to none. So if a
graph has a perfect matching, then it has an even number of
vertices; if a graph has an almost-perfect matching, then it has an
odd number of vertices. The \emph{matching preclusion number} of a
graph $G$, denoted by $mp(G)$, is the minimum number of edges whose
deletion leaves the resulting graph with neither perfect matchings
nor almost-perfect matchings. Such an optimal set is called an
\emph{optimal matching preclusion set}. We define $mp(G)=0$ if $G$
has neither perfect matchings nor almost-perfect matchings. This
concept of matching preclusion was introduced in \cite{BrighamHVY}
and further studied in \cite{BrighamHVY, ChengLJ, ChengLJa, ChengCM,
ChengL, ChengL2, ChengHJL, ChengLLL, Park, ParkS, ParkI, ParkI2,
WangWLL, MWCM, WangFZ}. Originally this concept was introduced as a
measure of robustness in the event of edge failure in
interconnection networks. An interconnection network with a larger
$MP$ number may be considered as more robust in the event of link
failures.

\begin{proposition}{\upshape \cite{BrighamHVY}}\label{pro1-1}
Let $G$ be a graph with an even number of vertices. Then $mp(G)\leq
\delta(G)$, where $\delta(G)$ is the minimum degree of $G$.
\end{proposition}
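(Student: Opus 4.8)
The plan is to exhibit an explicit edge set of size $\delta(G)$ whose removal destroys all perfect and almost-perfect matchings, since $mp(G)$ is by definition the \emph{minimum} size of such a set. First I would fix a vertex $v$ of minimum degree, so that $deg_G(v) = \delta(G)$, and let $F$ be the set of all edges of $G$ incident with $v$. Then $|F| = \delta(G)$, and in $G' = G - F$ the vertex $v$ is isolated, that is, $deg_{G'}(v) = 0$.

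Next I would verify that $G'$ has neither a perfect matching nor an almost-perfect matching. A perfect matching must cover every vertex, and in particular must use an edge incident with $v$; but $v$ is isolated in $G'$, so no perfect matching can exist. The almost-perfect case is where the hypothesis that $G$ has an even number of vertices enters: an almost-perfect matching covers all but exactly one vertex and therefore can only occur in a graph of odd order, whereas $G$ (and hence $G'$, which has the same vertex set) has an even number of vertices. Thus $G'$ admits no almost-perfect matching either.

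Combining these two observations, $F$ is a matching preclusion set of $G$ with $|F| = \delta(G)$, and since $mp(G)$ is the minimum cardinality over all such sets, we conclude $mp(G) \leq \delta(G)$.

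There is no serious obstacle here; the only point requiring care is the almost-perfect matching case, where one must invoke the parity assumption rather than the isolation of $v$ — isolating a vertex rules out perfect matchings directly, but the impossibility of an almost-perfect matching follows purely from the even order of $G$. I would also note that the bound is sharp, which motivates the subsequent study of graphs attaining equality, but sharpness is not needed for the inequality itself.
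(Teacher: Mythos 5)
Your proof is correct and is the standard argument for this bound: the paper itself states this proposition as a cited result from \cite{BrighamHVY} without reproducing a proof, and the argument in that reference is precisely the one you give — isolate a minimum-degree vertex to kill perfect matchings, and use the even order of $G$ to rule out almost-perfect matchings. Your explicit remark that parity, not the isolation of $v$, is what excludes the almost-perfect case is exactly the right point of care.
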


The following result, due to Dirac, is well-known.

\begin{theorem}{\upshape Dirac \cite{Bondy} \ (p-485)}\label{thA}
Let $G$ be a simple graph of order $n \ (n\geq 3)$ and minimum
degree $\delta$. If $\delta\geq \frac{n}{2}$, then $G$ is
Hamiltonian.
\end{theorem}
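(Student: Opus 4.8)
The plan is to use the standard extremal argument on a longest path and then upgrade it to a spanning cycle. First I would dispose of connectivity: if $G$ were disconnected, its smallest component would contain at most $\lfloor n/2\rfloor$ vertices, forcing some vertex there to have degree at most $\lfloor n/2\rfloor - 1 < \delta$, contradicting $\delta \geq n/2$. Hence $G$ is connected, and it suffices to build a cycle through every vertex.

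Next I would take a longest path $P = v_0 v_1 \cdots v_k$ in $G$. Because $P$ cannot be extended at either end, every neighbour of $v_0$ and every neighbour of $v_k$ already lies on $P$; in particular $k \geq \deg(v_0) \geq \delta \geq n/2$. The heart of the argument is to find a cycle $C$ through all of $v_0,\dots,v_k$. To this end I set $S = \{\, i : v_0 v_i \in E(G)\,\}$ and $T = \{\, i : v_{i-1} v_k \in E(G)\,\}$, both subsets of $\{1,\dots,k\}$, with $|S| = \deg(v_0)\geq\delta$ and $|T| = \deg(v_k)\geq\delta$. Then $|S| + |T| \geq 2\delta \geq n > k$, so $S \cap T \neq \emptyset$. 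Picking $i \in S\cap T$ yields the two chords $v_0 v_i$ and $v_{i-1}v_k$, and $v_0 v_1 \cdots v_{i-1} v_k v_{k-1} \cdots v_i v_0$ is a cycle $C$ on exactly the vertex set of $P$.

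Finally I would argue that $C$ is spanning. If some vertex lay outside $C$, then since $G$ is connected there is an edge $w v_j$ with $w \notin V(C)$ and $v_j \in V(C)$; cutting $C$ at $v_j$ turns it into a Hamiltonian path of $C$ with endpoint $v_j$, and appending the edge $w v_j$ produces a path on $k+2$ vertices, longer than $P$, a contradiction. Therefore $V(C) = V(G)$ and $C$ is a Hamiltonian cycle.

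The main obstacle is the cycle-construction step in the middle paragraph. The pigeonhole bound $|S| + |T| > k$ that forces $S \cap T \neq \emptyset$ depends on indexing the two neighbourhoods against each other, one by the vertex $v_i$ itself and one shifted by a single position to $v_{i-1}$, so that any overlap encodes a valid ``crossing'' which reroutes $P$ into a cycle without repeating a vertex. Fixing these index sets, getting the orientation of the rerouted cycle exactly right, and confirming that the counting genuinely uses $k \leq n-1$ is the delicate part; the connectivity step and the final extension argument are then routine.
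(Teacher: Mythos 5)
Your proof is correct. Note that the paper itself offers no proof of this statement: it is Dirac's classical theorem, quoted verbatim from Bondy--Murty as a known tool, so there is no internal argument to compare against. What you have written is the standard longest-path proof of Dirac's theorem, and every step checks out: the connectivity reduction is sound; the maximality of $P=v_0v_1\cdots v_k$ correctly forces all neighbours of $v_0$ and $v_k$ onto $P$, giving $k\geq\delta\geq n/2$; the index sets $S=\{i: v_0v_i\in E(G)\}$ and $T=\{i: v_{i-1}v_k\in E(G)\}$ are both subsets of $\{1,\dots,k\}$ with $|S|+|T|\geq 2\delta\geq n>k$ (using $k\leq n-1$), so the pigeonhole step and the rotation $v_0v_1\cdots v_{i-1}v_kv_{k-1}\cdots v_iv_0$ genuinely produce a cycle on $V(P)$, including in the boundary cases $i=1$ and $i=k$ where one of the two chords is a path edge; and the final step correctly converts any vertex outside the cycle into a path on $k+2$ vertices, contradicting the maximality of $P$. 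This is exactly the argument one finds in the textbook the paper cites, so nothing further is needed.
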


\begin{theorem}{\upshape \cite{Ore}}\label{th1-2}
Let $G$ be a graph of order $n$ in which $deg_{G}(u)+deg_{G}(v)\geq n$ for each pair of nonadjacent vertices $u,v$. Then $G$ is Hamiltonian.
\end{theorem}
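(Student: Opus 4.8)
The plan is to argue by contradiction, exploiting the fact that the degree hypothesis is preserved under the addition of edges. First I would suppose that some graph $G$ of order $n$ satisfies the stated condition yet fails to be Hamiltonian, and among all non-Hamiltonian graphs on the same vertex set that satisfy the hypothesis I would choose one, still denoted $G$, that is edge-maximal with respect to being non-Hamiltonian. This choice is legitimate: adding an edge never decreases a degree, so the sum condition $deg_G(u)+deg_G(v)\geq n$ persists, while $K_n$ is Hamiltonian for $n\geq 3$; hence a maximal counterexample exists and is necessarily not complete.

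Since $G$ is not complete, it has two nonadjacent vertices $u$ and $v$. By maximality, $G+uv$ is Hamiltonian, and every Hamiltonian cycle of $G+uv$ must traverse the new edge $uv$, for otherwise $G$ itself would already be Hamiltonian. Removing that edge from such a cycle leaves a Hamiltonian path of $G$ joining $u$ and $v$, which I write as $v_1v_2\cdots v_n$ with $v_1=u$ and $v_n=v$.

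The heart of the argument is a count of the neighbours of the two endpoints along this path. I would introduce the index sets $S=\{\,i : 1\leq i\leq n-1,\ uv_{i+1}\in E(G)\,\}$ and $T=\{\,i : 1\leq i\leq n-1,\ v_iv\in E(G)\,\}$, so that $|S|=deg_G(u)$ and $|T|=deg_G(v)$. The key claim, and the step I expect to be the main obstacle to phrase cleanly, is that $S$ and $T$ are disjoint. Indeed, if some index $i$ lay in both, then $uv_{i+1}$ and $v_iv$ would both be edges of $G$, and $v_1v_2\cdots v_iv_nv_{n-1}\cdots v_{i+1}v_1$ would be a Hamiltonian cycle of $G$, contradicting the choice of $G$. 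This rotation-type crossing is the one genuinely clever point; everything else is bookkeeping.

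Finally, since $S$ and $T$ are disjoint subsets of $\{1,2,\ldots,n-1\}$, we obtain $deg_G(u)+deg_G(v)=|S|+|T|\leq n-1$, contradicting the hypothesis $deg_G(u)+deg_G(v)\geq n$. This contradiction shows that no counterexample exists, so $G$ is Hamiltonian. I note that one cannot simply quote Theorem \ref{thA}, since Ore's condition is strictly weaker than the minimum-degree bound $\delta\geq n/2$, so a self-contained argument of this kind appears unavoidable.
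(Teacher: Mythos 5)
Your proof is correct, and it is essentially the classical argument for Ore's theorem: the paper itself states this result without proof (citing \cite{Ore}), and your maximal-counterexample plus rotation argument is exactly the standard proof found in the cited literature, with the disjointness of the index sets $S$ and $T$ correctly established via the crossing Hamiltonian cycle $v_1v_2\cdots v_iv_nv_{n-1}\cdots v_{i+1}v_1$. The only caveat worth noting is that your argument (specifically the step ``$K_n$ is Hamiltonian, so a maximal counterexample is not complete'') requires $n\geq 3$, a hypothesis the paper's statement omits but which the classical theorem includes; for $n=2$ the degree condition holds vacuously for $K_2$, which is not Hamiltonian.
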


\subsection{Fractional matching preclusion}

A fractional matching is a function $f$ that assigns to each edge a
number in $[0,1]$ such that $\sum_{e\thicksim v}f(e)\leq 1$ for each
vertex $v$, where the sum is taken over all edges $e$ incident with
$v$. If $f(e)\in \{0,1\}$ for each edges $e$, then $f$ is just a
matching. Clearly,
$$
\sum_{e\in E(G)}f(e)=\frac{1}{2}\sum_{v\in V(G)}\sum_{e\thicksim
v}f(e)\leq\frac{|V(G)|}{2}.
$$

The \emph{fractional matching number} of $G$, denoted by
$\mu_{f}(G)$, is the supremum of $\sum_{e\in E(G)}f(e)$ over all
fractional matching $f$. A \emph{fractional perfect matching} is a
fractional matching $f$ satisfying that $\sum_{e\in E(G)}f(e)=1$ for
every $v\in V(G)$. Clearly, a fractional matching $f$ is perfect if
and only if $\sum f(e)=\frac{|V(G)|}{2}$ and a perfect matching is a
fractional perfect matching.

An edge subset $F$ of $G$ is a \emph{\emph{fractional MP set}}
(\emph{FMP set for short}) if $G-F$ has no fractional perfect
matchings. The \emph{FMP number} of $G$, denoted by $fmp(G)$, is the
minimum size of FMP sets of $G$, that is,
$$
fmp(G)=\min\{|F|:F \text{ is an FMP set}\}.
$$

The following results are immediate.

\begin{observation}\label{obs1-1}
$(1)$ If $H$ is a spanning subgraph of $G$, then $fmp(H)\leq
fmp(G)$.

$(2)$ If $e$ is an edge of $G$, then $fmp(G-e)\geq fmp(G)-1$.

$(3)$ For an even graph $G$, $mp(G)\leq fmp(G)$.

$(4)$ If $|V(G)|$ is even and $mp(G)=\delta(G)$, then
$fmp(G)=mp(G)=\delta(G)$.

$(5)$ If $G=H_1\cup H_2$, then $fmp(G)=\min\{fmp(H_1),fmp(H_2)\}$.
\end{observation}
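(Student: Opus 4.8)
The plan is to derive all five parts directly from the definition of an FMP set together with one elementary principle: if a graph $H'$ is obtained from $H$ by deleting edges, then any fractional perfect matching of $H'$ extends to one of $H$ by assigning weight $0$ to the deleted edges. Consequently, whenever $H'$ is a spanning subgraph of $H$ and $H'$ has a fractional perfect matching, so does $H$; equivalently, destroying all fractional perfect matchings is only made harder by adding edges back. I record also that a graph possessing an isolated vertex has no fractional perfect matching, since the saturation condition $\sum_{e\sim v}f(e)=1$ fails at that vertex.

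For (1), I would take an optimal FMP set $F$ of $G$ and restrict it to $F'=F\cap E(H)$. Since $H$ is spanning, $H-F'$ is a spanning subgraph of $G-F$; as $G-F$ has no fractional perfect matching, neither does $H-F'$ by the extension principle, so $F'$ is an FMP set of $H$ with $|F'|\le|F|=fmp(G)$. For (2), starting from an optimal FMP set $F$ of $G-e$, the set $F\cup\{e\}$ satisfies $G-(F\cup\{e\})=(G-e)-F$, which has no fractional perfect matching; hence $fmp(G)\le fmp(G-e)+1$.

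For (3), recall that every perfect matching is a fractional perfect matching and that a graph of even order has no almost-perfect matching. Thus any FMP set $F$ leaves $G-F$ with no fractional perfect matching, in particular no perfect matching, and by parity no almost-perfect matching, so $F$ is a matching preclusion set and $mp(G)\le fmp(G)$. For (4), I would first exhibit an FMP set of size $\delta(G)$ by deleting the $\delta(G)$ edges incident with a minimum-degree vertex $v$, which isolates $v$ and thereby destroys every fractional perfect matching; this gives $fmp(G)\le\delta(G)$. Combining with part (3) yields $\delta(G)=mp(G)\le fmp(G)\le\delta(G)$, forcing equality.

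For (5), I would use that a fractional perfect matching of a disjoint union $G=H_1\cup H_2$ is precisely a pair of fractional perfect matchings, one on each $H_i$, so $G-F$ has a fractional perfect matching if and only if both $H_1-F_1$ and $H_2-F_2$ do, where $F_i=F\cap E(H_i)$. The upper bound follows by taking an optimal FMP set of whichever $H_i$ is cheaper and regarding it as an FMP set of $G$; the lower bound follows because an optimal FMP set of $G$ must already destroy the fractional perfect matching of at least one $H_i$, so its part in that component is an FMP set there. The routine steps are the bookkeeping of restricting and extending edge sets; the only points requiring genuine care are the biconditional in (5) and the use of the even-order hypothesis in (3)--(4) to rule out almost-perfect matchings, and neither presents a real obstacle.
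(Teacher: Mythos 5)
Your proofs of all five parts are correct. The paper states this observation without proof (``The following results are immediate''), and your arguments --- extending a fractional perfect matching by zero across a spanning subgraph for (1), augmenting an FMP set by $e$ for (2), parity plus ``every perfect matching is a fractional perfect matching'' for (3), isolating a minimum-degree vertex for (4), and the component-wise characterization of fractional perfect matchings in a disjoint union for (5) --- are exactly the routine verifications the authors leave implicit.
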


For complete graphs, Liu and Liu \cite{LiuLiu} derived the
following result.

\begin{theorem}{\upshape \cite{LiuLiu}}\label{th1-3}
For a complete graph $K_n$, $fmp(K_{n})=n-1$ if $n\geq 7$.
\end{theorem}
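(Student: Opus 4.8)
The plan is to prove both inequalities $fmp(K_n)\le n-1$ and $fmp(K_n)\ge n-1$. For the upper bound I would simply delete all $n-1$ edges incident with a fixed vertex $v$; in $K_n$ minus these edges the vertex $v$ is isolated, so $\sum_{e\thicksim v}f(e)=0<1$ for every fractional matching $f$, and hence no fractional perfect matching can exist. This exhibits an FMP set of size $n-1$, so $fmp(K_n)\le n-1$.

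For the lower bound I would show that deleting any edge set $F$ with $|F|\le n-2$ leaves a graph $K_n-F$ that still has a fractional perfect matching. The key reduction is that a Hamiltonian graph always admits one: if the order is even, alternate edges of a Hamiltonian cycle form a perfect matching, and if the order is odd, assigning weight $\tfrac12$ to every edge of the Hamiltonian cycle gives a fractional perfect matching. Thus whenever $\delta(K_n-F)\ge n/2$, Theorem~\ref{thA} (Dirac) makes $K_n-F$ Hamiltonian and we are finished. Since $\sum_{v}\bigl((n-1)-\deg_{K_n-F}(v)\bigr)=2|F|\le 2(n-2)$, each deleted edge lowering the total degree by $2$, only a few vertices can drop below degree $n/2$, and a short count bounds their number.

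The main obstacle is the remaining case, where $F$ is concentrated so that one or more vertices have small degree. Here I would invoke the deficiency characterization that $K_n-F$ has a fractional perfect matching if and only if $i\bigl((K_n-F)-S\bigr)\le |S|$ for every $S\subseteq V(K_n)$, where $i(\cdot)$ counts isolated vertices. Assuming no fractional perfect matching, fix a violating $S$ with $|S|=s$ and let $I$ be the set of $i>s$ isolated vertices of $(K_n-F)-S$. Every vertex of $I$ must have all of its $n-1-s$ edges into $V\setminus S$ deleted, so counting these deleted edges — with the $\binom{i}{2}$ edges inside $I$ shared between two endpoints — forces $|F|\ge i(n-1-s)-\binom{i}{2}$. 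I would then minimize the right-hand side over the feasible range $s+1\le i\le n-s$; this expression is concave in $i$, so its minimum occurs at an endpoint, and checking both endpoints shows the bound is at least $n-1$ precisely when $n\ge 7$. This contradicts $|F|\le n-2$ and yields $fmp(K_n)\ge n-1$, hence $fmp(K_n)=n-1$. The delicate step, and the place where the hypothesis $n\ge 7$ is genuinely needed, is verifying that the endpoint $i=s+1$ never drops below $n-1$ for $s$ up to $(n-1)/2$.
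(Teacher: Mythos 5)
Your proof is correct, but note that the paper does not prove this statement at all: it is imported as a known result of Liu and Liu \cite{LiuLiu}, so there is no internal proof to compare against. What you have written is a self-contained argument that the paper lacks. Your upper bound (isolate a vertex) is the standard $fmp(G)\le\delta(G)$ observation. Your lower bound rests on the Scheinerman--Ullman deficiency criterion (Theorem \ref{th2-2} in the paper): if $K_n-F$ has no fractional perfect matching, there is a set $S$ with $|S|=s$ leaving $i>s$ isolated vertices, and since each isolated vertex must lose all $n-1-s$ of its edges outside $S$, double-counting gives $|F|\ge i(n-1-s)-\binom{i}{2}$; this is concave in $i$, and at the endpoints $i=s+1$ and $i=n-s$ one gets $(s+1)\bigl(n-1-\tfrac{3s}{2}\bigr)$ and $\binom{n-s}{2}$ respectively, both of which are at least $n-1$ for all feasible $s\le\lfloor(n-1)/2\rfloor$ once $n\ge 7$. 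I verified this computation and it is sound. Two small remarks. First, your opening Dirac/Hamiltonicity reduction is redundant: the deficiency count handles every $F$ with $|F|\le n-2$, so the case split buys nothing. Second, your claim that the endpoint check succeeds ``precisely when $n\ge 7$'' is not quite right: because $s$ is an integer, the same count also goes through for $n=6$ and $n=4$ (where indeed $fmp(K_n)=n-1$); the argument genuinely breaks only at odd $n\le 5$ (e.g.\ for $K_5$, taking $s=2$, $i=3$ shows $fmp(K_5)\le 3=n-2$). Neither issue affects the validity of your proof for $n\ge 7$. Compared with the paper's bare citation, your route has the advantage of making the role of the threshold $n\ge 7$ completely explicit --- it is exactly where $(n-1)/2\le(2n-5)/3$ begins to hold --- and of using only a tool (Theorem \ref{th2-2}) that the paper already quotes.
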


\subsection{Extremal problem}

One of the interesting problems in extremal graph theory is the
Erd\"{o}s-Gallai-type problem, which is to determine the maximum or
minimum value of a graph parameter with some given properties. In
\cite{CaiLiWu, JiangLiZhang}, the authors investigated two kinds of
Erd\"{o}s-Gallai-type problems for monochromatic connection number
and monochromatic vertex connection number, respectively. Motivated
by these, we study two kinds of Erd\"{o}s-Gallai-type problems for
$fmp(G)$ in this paper.

\begin{itemize}
\item[] \noindent {\bf Problem 1.} Given two positive integers $n$ and $k$,
compute the minimum integer $f(n,k)$ such that for every connected
graph $G$ of order $n$, if $|E(G)|\geq f(n,k)$ then $fmp(G)\geq k$.

\item[] \noindent {\bf Problem 2.} Given two positive integers $n$ and $k$,
compute the maximum integer $g(n,k)$ such that for every graph $G$
of order $n$, if $|E(G)|\leq g(n,k)$ then $fmp(G)\leq k$.
\end{itemize}

Another interesting problem in extremal graph theory is to study the
minimum size of graphs with given parameter; see \cite{Schiermeyer}.
\begin{itemize}
\item[] \noindent {\bf Problem 3.} Given two positive integers $n$ and
$k$, compute the minimum integer $s(n,k)=\min\{|E(G)|:G\in
\mathscr{G}(n,k)\}$, where $\mathscr{G}(n,k)$ the set of all graphs
of order $n$ (that is, with $n$ vertices) with fractional matching
preclusion number $k$.
\end{itemize}

In Section 2, we show that $0\leq fmp(G)\leq n-1$ for a graph $G$ of
order $n$, and the graphs with $fmp(G)=0,1$, odd graphs
with $fmp(G)=n-1,n-2,n-k$ are characterized. In Section 3, we study
the above extremal problems on fractional matching preclusion
number. The results in this paper can be viewed as the fractional
matching preclusion number analogues of those in
\cite{WangMelekianChengMao}. So the basic structure of some of the
proofs are similar. However, more analysis are required here.

\section{Graphs with given FMP number}

Let $o(G)$ be the number of components of $G$ with an odd
number of vertices and $i(G)$ be the number of isolated
vertices of $G$. The main theorem on perfect matchings is the
following, due to Tutte \cite{Tutte}.

\begin{theorem}{\upshape \cite{Tutte}}\label{th2-1}
A graph $G$ has a perfect matching if and only if $o(G-S)\leq |S|$
for every subset $S\subseteq V(G)$.
\end{theorem}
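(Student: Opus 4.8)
The plan is to prove both directions, handling necessity by a short counting argument and sufficiency by the edge-maximal counterexample method of Lov\'asz. For necessity I would suppose $G$ has a perfect matching $M$ and fix any $S\subseteq V(G)$. Each odd component of $G-S$ has an odd number of vertices, so $M$ cannot saturate it using only internal edges; hence at least one of its vertices is matched by $M$ into $S$. Since distinct odd components claim distinct vertices of $S$ (each vertex meets at most one matching edge), this yields $o(G-S)\le |S|$.

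For sufficiency I would argue by contradiction. Assume $G$ satisfies Tutte's condition yet has no perfect matching; taking $S=\emptyset$ forces $o(G)=0$, so $n=|V(G)|$ is even. Among all graphs on the vertex set $V(G)$ satisfying the condition but admitting no perfect matching, I would choose $G$ with the maximum number of edges. This is legitimate because adding an edge can only merge components, so $o\big((G+e)-S\big)\le o(G-S)\le|S|$ preserves the condition; by maximality, $G+e$ then has a perfect matching for every non-edge $e$. Let $U=\{v\in V(G):\deg_G(v)=n-1\}$ be the set of vertices joined to all others.

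The heart of the argument, and the step I expect to be the main obstacle, is the claim that $G-U$ is a disjoint union of complete graphs. I would prove it by contradiction: otherwise some component of $G-U$ contains a path $a,b,c$ with $ab,bc\in E(G)$ but $ac\notin E(G)$, and since $b\notin U$ there is a vertex $d$ with $bd\notin E(G)$. Maximality supplies perfect matchings $M_1\ni ac$ of $G+ac$ and $M_2\ni bd$ of $G+bd$. Examining the symmetric difference $M_1\triangle M_2$, whose components are cycles alternating between $M_1$- and $M_2$-edges, I would splice the two matchings into a perfect matching of $G$ itself that uses neither $ac$ nor $bd$: when $ac$ and $bd$ lie in distinct cycles this follows by swapping $M_1$ for $M_2$ on the cycle through $bd$, and when they share a common cycle one reroutes through the genuine edge $ba$ or $bc$. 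Either outcome contradicts the non-existence of a perfect matching, establishing the claim.

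With the claim in hand, I would finish by constructing a perfect matching outright. Match each even component of $G-U$ internally; from each odd component set aside one vertex and match it to a distinct vertex of $U$, which is possible because $o(G-U)\le|U|$ and every vertex of $U$ is adjacent to all others. A parity count using that $n$ is even shows that the unused vertices of $U$ are even in number, and since they induce a complete graph they too have a perfect matching. Together these saturate every vertex of $V(G)$, contradicting the assumption that $G$ has no perfect matching and completing the proof.
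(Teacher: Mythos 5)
The paper does not prove this statement at all: Theorem \ref{th2-1} is Tutte's classical 1-factor theorem, quoted from \cite{Tutte} as known background (the paper only states it alongside Theorem \ref{th2-2} and uses it), so there is no in-paper proof to compare yours against. What you have written is the standard Lov\'asz edge-maximality proof, and it is essentially correct: the counting argument for necessity is fine, and in the sufficiency direction all the key steps are present and in the right order --- preservation of Tutte's condition under edge addition (merging components cannot increase the number of odd ones), the parity observation from $S=\emptyset$, the claim that $G-U$ is a disjoint union of cliques via the two matchings $M_1\ni ac$, $M_2\ni bd$ and the components of $M_1\triangle M_2$, and the final assembly of a perfect matching using $o(G-U)\le |U|$ and the completeness of $G[U]$. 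Two details deserve care if you write this out in full. First, in the distinct-cycle case the phrase ``swapping $M_1$ for $M_2$ on the cycle through $bd$'' is directionally ambiguous, and read literally it is backwards: the matching you want is $M_2$ off that cycle together with the $M_1$-edges on it (equivalently, $M_1$ with the swap performed on the cycle through $ac$); swapping the other way would put $bd$, a non-edge of $G$, into the matching. Second, the same-cycle case needs explicit bookkeeping: after using the symmetry of $a$ and $c$ to write the cycle as $a\to c\to\cdots\to b\to d\to\cdots\to a$, take the $M_2$-edges of the segment from $c$ to $b$, the $M_1$-edges of the segment from $d$ to $a$, and the edge $ab$; a parity check on the two segments shows this covers $V(C)$ exactly once, and either matching restricted to the complement of $V(C)$ completes it. With those details filled in, your argument is a complete and correct proof of the cited theorem.
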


The analogous theorem for fractional perfect matchings is the
following result.

\begin{theorem}{\upshape \cite{ScheinermanUllman}}\label{th2-2}
A graph $G$ has a fractional perfect matching if and only if
$i(G-S)\leq |S|$ for every subset $S\subseteq V(G)$.
\end{theorem}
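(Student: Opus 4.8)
The plan is to prove the two implications separately; necessity is the easier direction and I would settle it by a direct weight-counting argument, reserving the structural machinery for sufficiency. For necessity, suppose $G$ has a fractional perfect matching $f$ and fix any $S\subseteq V(G)$; write $I$ for the set of isolated vertices of $G-S$, so $|I|=i(G-S)$. Each $v\in I$ has all of its neighbours in $S$ and there are no edges inside $I$, so every edge meeting $I$ joins $I$ to $S$. Summing $\sum_{e\sim v}f(e)=1$ over $v\in I$ gives total weight $|I|$, and this is exactly the $f$-weight carried by the edges between $I$ and $S$; but that weight is at most $\sum_{u\in S}\sum_{e\sim u}f(e)\le|S|$. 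Hence $i(G-S)=|I|\le|S|$, which is the desired inequality.

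For sufficiency I would argue the contrapositive and reduce to Tutte's Theorem~\ref{th2-1} through the bipartite double cover $H:=G\times K_2$, whose vertices are two copies $v_0,v_1$ of each $v\in V(G)$ with $u_0v_1,u_1v_0\in E(H)$ exactly when $uv\in E(G)$. The key elementary fact is that a perfect matching $M$ of $H$ pushes down to a fractional perfect matching of $G$: setting $f(uv)=\tfrac12\,|M\cap\{u_0v_1,u_1v_0\}|$ gives $\sum_{e\sim v}f(e)=\tfrac12(\deg_M(v_0)+\deg_M(v_1))=1$ for every $v$. Thus if $G$ has no fractional perfect matching then $H$ has no perfect matching, so by Theorem~\ref{th2-1} there is a set $T\subseteq V(H)$ with $o(H-T)>|T|$, and the task becomes to convert $T$ into a set $S\subseteq V(G)$ witnessing $i(G-S)>|S|$.

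The hard part is this conversion, because an arbitrary barrier $T$ need not respect the two layers of the cover. I would first replace $T$ by the inclusion-wise largest maximum-deficiency barrier, which is unique and hence invariant under the layer-swapping involution of $H$; therefore it has the symmetric form $T^{*}=\{v_0,v_1:v\in S\}$ for some $S\subseteq V(G)$. For such $T^{*}$ one has $H-T^{*}=(G-S)\times K_2$, whose components are the double covers of the components of $G-S$: a non-bipartite component lifts to a single component of even order, while a bipartite component $C$ lifts to two disjoint copies of $C$, which are odd components precisely when $|C|$ is odd. Hence $o(H-T^{*})=2\,b$, where $b$ is the number of bipartite odd-order components of $G-S$, and $o(H-T^{*})>|T^{*}|=2|S|$ gives $b>|S|$. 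A final adjustment upgrades ``bipartite odd-order component'' to ``isolated vertex'': in each such non-trivial component, deleting the smaller side of its bipartition (adding $q$ vertices to $S$) turns the larger side into at least $q+1$ isolated vertices, so the quantity $b-|S|$ never decreases; iterating yields $S'$ with $i(G-S')>|S'|$, completing the contrapositive. I expect the verification that the extremal barrier can be taken symmetric (via the lattice of maximum-deficiency sets) to be the most delicate point.
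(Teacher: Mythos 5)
The paper itself gives no proof of Theorem~\ref{th2-2}---it is imported verbatim from \cite{ScheinermanUllman}---so your argument has to stand entirely on its own. Your necessity direction is correct, and most of your sufficiency direction is sound as well: the fact that a perfect matching of $H=G\times K_2$ projects to a fractional perfect matching of $G$, the description of the components of $H-T^{*}=(G-S)\times K_2$ for a layer-symmetric set $T^{*}$, and the final conversion of bipartite odd components into isolated vertices are all correct. The genuine gap is exactly the step you flagged as delicate: the claim that the maximum-deficiency sets of $H$ have a unique inclusion-wise largest member (which would then automatically be invariant under the layer swap) is false; these sets are not closed under union, so no such largest member need exist. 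Concretely, take $G=K_2\cup K_1$ with edge $ab$ and isolated vertex $c$, so that $G$ has no fractional perfect matching and $H=G\times K_2$ consists of the two edges $a_0b_1$, $a_1b_0$ together with the isolated vertices $c_0,c_1$; the maximum Tutte deficiency of $H$ is $2$. Both $T_1=\{a_0,a_1\}$ and $T_2=\{a_0,b_0\}$ satisfy $o(H-T_i)-|T_i|=2$, and both are inclusion-maximal with this property (adding any further vertex drops the deficiency to $0$), yet their union $\{a_0,a_1,b_0\}$ gives $o(H-T)-|T|=0$. So there is no ``largest'' barrier, and $T_2$ shows that an arbitrary, even maximal, barrier need not be symmetric.

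The proof is repairable, but the repair needs a tool genuinely stronger than Theorem~\ref{th2-1}: what you need is \emph{some} canonical barrier, and the standard choice is the Gallai--Edmonds set $A(H)$, where $D(H)$ is the set of vertices missed by at least one maximum matching of $H$ and $A(H)$ is the set of vertices outside $D(H)$ with a neighbour in $D(H)$. The Gallai--Edmonds structure theorem gives $o(H-A(H))-|A(H)|=\mathrm{def}(H)>0$, and since $D(H)$, hence $A(H)$, is defined intrinsically from the graph, it is fixed by every automorphism of $H$, in particular by $v_0\leftrightarrow v_1$; substituting $A(H)$ for your $T^{*}$ makes the rest of your argument go through verbatim (in the example above, $A(H)=\emptyset$, which works). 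Alternatively, since $H$ is bipartite one could replace Tutte's theorem by Hall's theorem, but the same difficulty reappears in a different guise: for an arbitrary Hall violator $X'\subseteq V(G)$ the naive choice $S=N_G(X')$ can fail when $X'$ is not independent---for the triangle $xyz$ with two pendant vertices $p,q$ attached to $z$, the violator $X'=\{p,q,x,y\}$ has $|N_G(X')|=3<4$ but yields only two isolated vertices in $G-N_G(X')$---so there, too, a carefully chosen extremal violator rather than an arbitrary one is required. As written, your argument invokes a false lemma at its pivotal point and is therefore incomplete.
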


It is clear that the above theorem can be used to characterized
graph with fractional preclusion number at most $k$ in the following
way: $fmp(G)\leq k$ if and only if there exist $T\subset E(G)$ where
$|T|\leq k$ and $S\subseteq V(G-T)$ such that $i(G-S\cup T)>|S|$.

\begin{theorem}{\upshape \cite{ScheinermanUllman}}\label{th2-3}
The following are equivalent for a graph $G$.

$(1)$ $G$ has a fractional perfect matching.

$(2)$ There is a partition ${V_{1},V_{2},\cdots,V_{n}}$ of the
vertex set $V(G)$ such that, for each $i$, the graph $G[V_{i}]$ is
either $K_{2}$ or Hamiltonian.

$(3)$ There is a partition ${V_{1},V_{2},\cdots,V_{n}}$ of the
vertex set $V(G)$ such that, for each $i$, the graph $G[V_{i}]$ is
either $K_{2}$ or a Hamiltonian graph on an odd number of vertices.
\end{theorem}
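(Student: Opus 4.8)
The plan is to prove the three-way equivalence through the cyclic chain $(1)\Rightarrow(3)\Rightarrow(2)\Rightarrow(1)$, which concentrates all the real work into the single implication $(1)\Rightarrow(3)$ and leaves the other two as short arguments. The implication $(3)\Rightarrow(2)$ is immediate, since a Hamiltonian graph on an odd number of vertices is in particular a Hamiltonian graph, so any partition witnessing $(3)$ already witnesses $(2)$.

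For $(2)\Rightarrow(1)$ I would build a fractional perfect matching part by part. On each $V_i$ inducing $K_2$, assign weight $1$ to its single edge. On each $V_i$ inducing a Hamiltonian graph, fix a Hamiltonian cycle $C$ of $G[V_i]$: if $|V_i|$ is even, assign weight $1$ to alternate edges of $C$ (a genuine perfect matching of the even cycle), and if $|V_i|$ is odd, assign weight $\tfrac12$ to every edge of $C$. Setting $f=0$ on all remaining edges, each vertex then sees incident weights summing to exactly $1$ — either one weight-$1$ edge or two weight-$\tfrac12$ cycle edges — so $f$ is a fractional perfect matching of $G$.

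The heart of the argument is $(1)\Rightarrow(3)$, which I would route through the geometry of the fractional matching polytope $P=\{f\in\mathbb{R}^{E(G)}:f\ge 0,\ \sum_{e\thicksim v}f(e)\le 1\text{ for all }v\}$. A fractional perfect matching is exactly a point of $P$ lying on the face $\mathcal{F}$ where all vertex inequalities are tight; since $P$ is a bounded polytope and $\mathcal{F}$ is nonempty by hypothesis, $\mathcal{F}$ contains an extreme point $f^{\ast}$ of $P$. The key lemma — and the main obstacle — is that every extreme point of $P$ is half-integral, with $f^{\ast}(e)\in\{0,\tfrac12,1\}$, the weight-$1$ edges forming a matching and the weight-$\tfrac12$ edges forming vertex-disjoint \emph{odd} cycles. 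I would prove this by a perturbation argument on the support $H=\{e:f^{\ast}(e)>0\}$: an even cycle in $H$ admits an alternating $\pm\varepsilon$ adjustment that preserves nonnegativity and keeps every vertex constraint tight, exhibiting $f^{\ast}$ as the midpoint of two distinct feasible points and contradicting extremality; a careful refinement of the same idea (routing weight between two cycles along a connecting path) rules out the remaining non-half-integral configurations, forcing each component of $H$ to be a single weight-$1$ edge or an odd cycle carrying weight $\tfrac12$ throughout.

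Finally I would read $(3)$ off from $f^{\ast}$. Because $f^{\ast}$ is a \emph{perfect} fractional matching, every vertex lies in the support, so the weight-$1$ edges together with the odd cycles partition $V(G)$; taking the $V_i$ to be their vertex sets, each weight-$1$ edge gives a part inducing $K_2$ and each odd cycle $C_{2k+1}$ gives a part whose induced subgraph contains a spanning cycle and is therefore a Hamiltonian graph on an odd number of vertices. The hard part throughout is the half-integrality lemma, and within it the parity argument that excludes even cycles from the support of an extreme point; once that structural fact is in hand, the remaining implications are bookkeeping.
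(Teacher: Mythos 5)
The paper contains no proof of this theorem: it is quoted from Scheinerman and Ullman \cite{ScheinermanUllman} and used as a black box, so there is no in-paper argument to compare yours against. Judged on its own merits, your proposal is correct and follows what is essentially the textbook route. The implications $(3)\Rightarrow(2)$ and $(2)\Rightarrow(1)$ are handled exactly as they should be (weight $1$ on each $K_{2}$, an alternating perfect matching on each even Hamiltonian cycle, weight $\tfrac12$ around each odd one), and reducing $(1)\Rightarrow(3)$ to the half-integrality of extreme points of the fractional matching polytope is the classical argument (Balinski's theorem), which is also how the cited source obtains the partition.

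The one thin spot is the proof of that key lemma itself. The even-cycle perturbation is stated precisely, but the phrase ``a careful refinement of the same idea \dots rules out the remaining non-half-integral configurations'' compresses genuine work: you must also dispose of path components of the support (checking where the vertex constraints have slack at the path ends), handle components consisting of two odd cycles joined by a path, and then argue that tightness of \emph{every} vertex constraint forces each surviving component to be a single weight-$1$ edge or an odd cycle carrying weight $\tfrac12$ on all its edges. These steps are standard --- an alternative clean route is to lift $f$ to the bipartite double cover, whose fractional matching polytope is integral, and invoke extremality to deduce half-integrality --- but as written they are asserted rather than proved. With that lemma filled in, your chain $(1)\Rightarrow(3)\Rightarrow(2)\Rightarrow(1)$ is complete and correct.
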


\begin{proposition}\label{pro2-1}
Let $G$ be a graph of order $n$. If $fmp(G)=k$, then
$$
\delta(G)\leq \frac{n}{2}+k-1.
$$
\end{proposition}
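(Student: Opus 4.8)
The plan is to read the bound off the fractional analogue of Tutte's theorem. Since $fmp(G)=k$, the reformulation stated just after Theorem \ref{th2-2} supplies an optimal FMP set $F$ with $|F|=k$ together with a witness set $S\subseteq V(G)$ satisfying $i(G-S\cup F)>|S|$. Writing $s=|S|$ and letting $I$ be the set of vertices that are isolated in $G-S\cup F$, this says exactly $|I|\ge s+1$. I will then extract $\delta(G)\le \frac n2+k-1$ from two elementary observations about $I$ and $S$, so the only substantive input is the characterization itself.

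First I would bound $s$. Because $I$ and $S$ are disjoint subsets of $V(G)$ and $|I|\ge s+1$, we get $n\ge |S|+|I|\ge s+(s+1)=2s+1$, hence $s\le \frac{n-1}{2}$; as $s$ is an integer this tightens to $s\le \frac n2-1$. Second I would bound $\delta(G)$ using a single isolated vertex. Fix any $w\in I$ (one exists since $|I|\ge s+1\ge 1$). In $G-S\cup F$ the vertex $w$ is incident to no edge, so every edge of $G$ at $w$ either has its other endpoint in $S$ or lies in $F$. Therefore $\deg_G(w)\le |N_G(w)\cap S|+|\{e\in F:\,w\in e\}|\le s+k$, and since $\delta(G)\le \deg_G(w)$ we obtain $\delta(G)\le s+k$. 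Combining the two estimates gives $\delta(G)\le \frac n2-1+k=\frac n2+k-1$, as desired.

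The argument is short, so the points I expect to require the most care are the following. The first is invoking the characterization correctly: I must make sure the optimal $F$ genuinely comes equipped with a set $S$ for which $G-S-F$ has strictly more than $|S|$ isolated vertices, which is precisely what the line following Theorem \ref{th2-2} asserts. The second, and the real obstacle, is the degree accounting at $w$: one has to verify that after deleting the vertices of $S$ and the edges of $F$ no edge of $G$ at $w$ can survive, which is what forces every such edge into $S\cup F$ and yields the clean inequality $\deg_G(w)\le s+k$ rather than a weaker bound obtained by summing carelessly over all of $I$. Finally, the passage from the honest inequality $s\le \frac{n-1}{2}$ to the stated form $\frac n2+k-1$ uses integrality of $s$, so I would keep an eye on the parity of $n$ in that last step.
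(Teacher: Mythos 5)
Your route is genuinely different from the paper's. The paper argues by contradiction using Dirac's theorem (Theorem \ref{thA}): if $\delta(G)\geq \frac{n}{2}+k$, then deleting any $k$ edges leaves minimum degree at least $\frac{n}{2}$, so the resulting graph is Hamiltonian and therefore has a fractional perfect matching, contradicting $fmp(G)=k$. You instead read the bound off the isolated-vertex characterization (Theorem \ref{th2-2} and the remark following it, which do appear before this proposition, so there is no circularity): one isolated vertex $w$ gives $\delta(G)\leq \deg_G(w)\leq s+k$, and disjointness of $S$ and $I$ gives $s\leq \frac{n-1}{2}$. Your two key steps (every edge at $w$ must land in $S$ or lie in $F$, and $n\geq |S|+|I|\geq 2s+1$) are both correct. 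Your approach buys self-containedness --- no Hamiltonicity input at all --- and it exhibits the structure ($S$ and the independent-ish set $I$) that forces a low-degree vertex; the paper's argument is shorter.

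There is, however, a flaw in your final step, exactly where you said to keep an eye on parity: from $s\leq\frac{n-1}{2}$, integrality of $s$ tightens to $s\leq\frac{n}{2}-1$ only when $n$ is \emph{even}. When $n$ is odd, $\frac{n-1}{2}$ is itself an integer (for $n=7$, nothing rules out $s=3$, whereas $\frac{n}{2}-1=2.5$), so your argument only yields $\delta(G)\leq\frac{n-1}{2}+k=\frac{n}{2}+k-\frac{1}{2}$, which falls short of the stated $\frac{n}{2}+k-1$ (by one, once integrality of $\delta$ is accounted for). To be fair, the paper's own proof has precisely the same deficiency: refuting $\delta(G)\geq\frac{n}{2}+k$ shows only $\delta(G)<\frac{n}{2}+k$, which for odd $n$ likewise allows $\delta(G)=\frac{n-1}{2}+k$. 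So your proof establishes exactly as much as the paper's: the full claim for even $n$, and the weaker bound $\delta(G)\leq\frac{n-1}{2}+k$ for odd $n$. If you wanted to close the odd case, your framework offers a natural refinement that the Dirac route does not (its degree threshold being sharp): sum the $F$-degrees over all of $I$, noting each edge of $F$ covers at most two vertices of $I$, to find a vertex $w\in I$ meeting few edges of $F$ and hence of degree closer to $s$.
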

\begin{proof}
Suppose $\delta(G)\geq \frac{n}{2}+k$. For any $X\subseteq E(G)$ and
$|X|=k$, $\delta(G-X)\geq \frac{n}{2}$. From Theorem \ref{thA}, $G-X$
contains a Hamiltonian cycle, and hence $G-X$ contains a fractional
perfect matching, which contradicts the fact that $fmp(G)=k$.
\end{proof}

Note that each graph $G$ with $n$ vertices is a spanning subgraph of
$K_{n}$. The following bounds are immediate by Observation
\ref{obs1-1} and Theorem~\ref{th1-3}.

\begin{proposition}\label{pro2-2}
Let $G$ be a connected graph of order $n$. Then
$$
0\leq fmp(G)\leq n-1.
$$
Moreover, the upper and lower bounds are sharp.
\end{proposition}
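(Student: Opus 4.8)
The plan is to establish the two inequalities separately and then exhibit extremal graphs witnessing sharpness. The lower bound $fmp(G)\geq 0$ is immediate, since $fmp(G)$ is by definition the minimum cardinality of a family of edge sets and cardinalities are nonnegative; indeed $fmp(G)=0$ precisely when $G$ already fails to have a fractional perfect matching, so the empty set is an FMP set.

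For the upper bound I would give a self-contained degree argument valid for all orders $n$. If $G$ has no fractional perfect matching, then $\emptyset$ is an FMP set and $fmp(G)=0\leq n-1$. Otherwise, pick a vertex $v$ of minimum degree and delete the $\delta(G)$ edges incident with $v$; this isolates $v$, so in the resulting graph $H$ we may take $S=\emptyset$ in Theorem~\ref{th2-2} and obtain $i(H-S)\geq 1>0=|S|$, whence $H$ has no fractional perfect matching. Thus the deleted edges form an FMP set and $fmp(G)\leq\delta(G)\leq n-1$, the last step using that the maximum possible degree in a simple graph of order $n$ is $n-1$. Alternatively, following the route indicated before the statement, one observes that $G$ is a spanning subgraph of $K_n$, so $fmp(G)\leq fmp(K_n)$ by Observation~\ref{obs1-1}(1); for $n\geq 7$ Theorem~\ref{th1-3} gives $fmp(K_n)=n-1$, and the small cases $n<7$ are absorbed by the degree bound above.

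For sharpness of the upper bound, the complete graph $K_n$ with $n\geq 7$ satisfies $fmp(K_n)=n-1$ by Theorem~\ref{th1-3}, so the value $n-1$ is attained. For sharpness of the lower bound I would exhibit a connected graph with no fractional perfect matching; the star $K_{1,n-1}$ (which is $P_3$ when $n=3$) works, since deleting no edges and taking $S$ to be the center yields $n-1$ isolated vertices in $K_{1,n-1}-S$, so $i>|S|=1$ whenever $n\geq 3$, and hence $fmp(K_{1,n-1})=0$ by Theorem~\ref{th2-2}.

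The only real subtlety is the upper bound for small orders: Theorem~\ref{th1-3} is stated only for $n\geq 7$, so the spanning-subgraph comparison alone does not settle $n\leq 6$. The isolate-a-vertex argument closes this gap uniformly, and I would therefore present it as the primary proof of the upper bound, using the $K_n$ comparison only to verify sharpness.
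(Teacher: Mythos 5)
Your proof is correct, and its primary argument for the upper bound is genuinely different from the paper's. The paper settles the proposition in one line: every graph of order $n$ is a spanning subgraph of $K_n$, so Observation \ref{obs1-1}(1) together with Theorem \ref{th1-3} gives $fmp(G)\leq fmp(K_n)=n-1$, with $K_n$ witnessing sharpness above and any graph lacking a fractional perfect matching witnessing it below. You instead prove $fmp(G)\leq \delta(G)\leq n-1$ directly, by deleting the edges at a minimum-degree vertex and applying Theorem \ref{th2-2} with $S=\emptyset$, and you use the $K_n$ comparison only for sharpness. Your route buys something real: Theorem \ref{th1-3} is stated only for $n\geq 7$, and indeed $fmp(K_3)=1$ and $fmp(K_5)=3$ (for $K_5$, deleting a triangle on three vertices and taking $S$ to be the other two vertices gives three isolated vertices), so the equality $fmp(K_n)=n-1$ actually fails for small odd $n$ and the paper's one-line justification does not literally cover $n\leq 6$; your isolate-a-vertex argument closes that gap uniformly in $n$. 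What the paper's approach buys is brevity, given the cited theorem. A further benefit of your argument: the inequality $fmp(G)\leq\delta(G)$ is invoked repeatedly later in the paper (e.g., in Proposition \ref{pro2-4}, Proposition \ref{pro2-5}, and Corollary \ref{cor2-5}) without ever being proved explicitly, and your degree argument supplies exactly that step. Your sharpness examples, the star $K_{1,n-1}$ for the lower bound and $K_n$ with $n\geq 7$ for the upper bound, are both valid.
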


\subsection{Graphs with small FMP number}

The following corollary is immediate by Theorem \ref{th2-3}.
\begin{corollary}\label{cor2-4}
Let $G$ be a graph. Then $fmp(G)=0$ if and only if for every partition
${V_{1},V_{2},\cdots,V_{n}}$ of the vertex set $V(G)$, there exists
some $V_{j}$ such that the graph $G[V_{i}]$ is neither $K_{2}$ nor a
Hamiltonian graph of order $n$.
\end{corollary}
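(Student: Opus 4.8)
The plan is to recognize this corollary as nothing more than the logical negation of the equivalence $(1)\Leftrightarrow(2)$ in Theorem~\ref{th2-3}, after first unwinding the definition of $fmp(G)=0$. The whole argument is a single quantifier manipulation, so I would keep it short and entirely formal.

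First I would observe that, by the definition of the FMP number, $fmp(G)=0$ holds precisely when the minimum FMP set has size $0$, i.e. when the empty set is itself an FMP set. Since an edge subset $F$ is an FMP set exactly when $G-F$ has no fractional perfect matching, taking $F=\emptyset$ shows that $fmp(G)=0$ is equivalent to the assertion that $G=G-\emptyset$ admits \emph{no} fractional perfect matching.

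Next I would invoke the equivalence of conditions $(1)$ and $(2)$ in Theorem~\ref{th2-3}, which states that $G$ has a fractional perfect matching if and only if there exists a partition $V_{1},V_{2},\ldots,V_{n}$ of $V(G)$ for which every induced subgraph $G[V_{i}]$ is either $K_{2}$ or Hamiltonian. Negating both sides of this biconditional gives: $G$ has no fractional perfect matching if and only if no such partition exists, that is, for \emph{every} partition $V_{1},V_{2},\ldots,V_{n}$ of $V(G)$ there is at least one index $j$ with $G[V_{j}]$ neither $K_{2}$ nor Hamiltonian. Combining this with the reformulation of $fmp(G)=0$ from the previous step yields exactly the claimed equivalence.

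I do not expect any genuine obstacle, since the statement is a one-line consequence of Theorem~\ref{th2-3}; the only care required is the mechanical negation of the existential quantifier over partitions. I would, however, flag two minor slips in the printed statement to be corrected in the final proof: the index mismatch between $V_{j}$ and $G[V_{i}]$, and the phrase ``of order $n$'' (here $n$ is the number of parts in the partition, not the order of $G$, and a Hamiltonian part may have any odd order by part~$(3)$ of Theorem~\ref{th2-3}). The condition should be read simply as the failure of Theorem~\ref{th2-3}$(2)$, namely that in every partition some part induces a subgraph that is neither $K_{2}$ nor Hamiltonian.
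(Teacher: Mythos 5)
Your proposal is correct and matches the paper's approach exactly: the paper gives no explicit argument, stating only that the corollary ``is immediate by Theorem~\ref{th2-3},'' and your write-up simply spells out why --- $fmp(G)=0$ means the empty set is an FMP set, i.e.\ $G$ itself has no fractional perfect matching, and the corollary is then the negation of equivalence $(1)\Leftrightarrow(2)$ of Theorem~\ref{th2-3}. Your flagging of the two typographical slips in the statement (the $V_j$/$V_i$ index mismatch and the spurious ``of order $n$'') is also accurate.
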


Connected graphs with $fmp(G)=1$ can be characterized in the
following.
\begin{proposition}\label{pro2-3}
Let $G$ be a connected graph. Then $fmp(G)=1$ if and only if $G$
satisfies the following conditions.

$(1)$ There is a partition ${V_{1},V_{2},\cdots,V_{n}}$ of the
vertex set $V(G)$ such that, for each $i$, the graph $G[V_{i}]$ is
either $K_{2}$ or a Hamiltonian graph on an odd number of vertices.

$(2)$ There exists an edge $e\in E(G)$ such that for every partition
${V_{1},V_{2},\cdots,V_{n}}$ of the vertex set $V(G)$, there exists
some $V_{j}$ such that the graph $G'[V_{i}]$ is neither $K_{2}$ nor
a Hamiltonian graph of order $n$, where $G'=G-e$.
\end{proposition}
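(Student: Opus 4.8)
The plan is to decompose the equality $fmp(G)=1$ into the two one-sided statements $fmp(G)\geq 1$ and $fmp(G)\leq 1$, and to match these respectively with conditions $(1)$ and $(2)$ via Theorem~\ref{th2-3} and Corollary~\ref{cor2-4}. The whole argument is a matter of unwinding the definition of $fmp$ through these two cited characterizations.

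First I would treat the necessity direction, assuming $fmp(G)=1$. Since $fmp(G)\geq 1$, the empty edge set is not an FMP set, i.e.\ $G$ itself admits a fractional perfect matching. Applying the equivalence $(1)\Leftrightarrow(3)$ of Theorem~\ref{th2-3} then yields exactly a partition of $V(G)$ of the type described in condition $(1)$. Next, since $fmp(G)\leq 1$, there is an FMP set $F$ with $|F|\leq 1$; because $G$ has a fractional perfect matching, $F\neq\emptyset$, so $F=\{e\}$ for some edge $e$, and $G'=G-e$ has no fractional perfect matching. Invoking Corollary~\ref{cor2-4} applied to $G'$ (which characterizes the absence of a fractional perfect matching, equivalently $fmp(G')=0$) gives precisely condition $(2)$.

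For sufficiency, assume $(1)$ and $(2)$. Condition $(1)$ is hypothesis $(3)$ of Theorem~\ref{th2-3}, so $G$ has a fractional perfect matching; hence no FMP set can be empty and $fmp(G)\geq 1$. Condition $(2)$ provides an edge $e$ such that, by Corollary~\ref{cor2-4} applied to $G'=G-e$, the graph $G-e$ has no fractional perfect matching; thus $\{e\}$ is an FMP set and $fmp(G)\leq 1$. Combining the two bounds gives $fmp(G)=1$.

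Since the mathematics is bookkeeping through the two cited characterizations, I expect no genuine obstacle. The only point requiring care is reading conditions $(1)$ and $(2)$ with the intended indexing: in $(2)$ the quantified partition is of $V(G')=V(G)$, the distinguished part should be read as $V_j$, and ``neither $K_2$ nor a Hamiltonian graph of order $n$'' should be understood as ``neither $K_2$ nor a Hamiltonian graph on an odd number of vertices,'' matching the wording of Theorem~\ref{th2-3}$(3)$ and Corollary~\ref{cor2-4}. With this reading, condition $(2)$ is exactly the assertion that $fmp(G-e)=0$, and the stated equivalence follows.
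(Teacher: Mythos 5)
Your proof is correct and takes essentially the same approach as the paper: both directions split $fmp(G)=1$ into $fmp(G)\geq 1$ (matched with condition $(1)$ via Theorem~\ref{th2-3}) and $fmp(G)\leq 1$ (matched with condition $(2)$ via Corollary~\ref{cor2-4}). The only cosmetic difference is that the paper bounds $fmp(G)\leq fmp(G-e)+1=1$, whereas you observe directly that $\{e\}$ is an FMP set; these are the same argument.
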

\begin{proof}
Suppose $(1)$ and $(2)$ hold. Since $(1)$ holds, it follows from
Theorem \ref{th2-3} that $fmp(G)\geq 1$. Since $(2)$ holds, it
follows that $fmp(G')=0$ and hence $fmp(G)\leq fmp(G')+1=1$. So
$fmp(G)=1$.

Suppose $fmp(G)=1$. From Theorem \ref{th2-3}, $(1)$ holds. Since
$fmp(G)=1$, it follows that there exists an edge $e$ such that
$fmp(G-e)=0$. From Corollary \ref{cor2-4}, $(2)$ holds.
\end{proof}

\subsection{Graphs with large FMP number}

Wang et al. \cite{WangMaoChengZou} characterized even graphs with
given matching preclusion number.
\begin{theorem}{\upshape \cite{WangMaoChengZou}}\label{lem2-1}
Let $n,k$ be two integers with $n\geq 4k+6$, and let $G$ be an even
graph of order $n$. Then $mp(G)=n-k$ if and only if $\delta(G)=n-k$.
\end{theorem}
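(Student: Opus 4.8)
The plan is to isolate the single substantive implication and then derive the equivalence from it almost for free. Concretely, I would first establish the following Key Lemma: if $G$ is an even graph of order $n$ with $\delta(G)=n-k$ and $n\geq 4k+6$, then $mp(G)=n-k$. Granting this, the stated equivalence follows by bootstrapping. The direction $\delta(G)=n-k\Rightarrow mp(G)=n-k$ is exactly the Key Lemma. For the converse, recall that $mp(G)\leq\delta(G)$ by Proposition \ref{pro1-1}, so $mp(G)=n-k$ forces $\delta(G)\geq n-k$; if instead $\delta(G)=n-k'$ with $1\leq k'<k$, then $n\geq 4k+6\geq 4k'+6$ and the Key Lemma applied to $k'$ would give $mp(G)=n-k'>n-k$, a contradiction. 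Hence $\delta(G)=n-k$.

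For the Key Lemma the upper bound $mp(G)\leq\delta(G)=n-k$ is immediate from Proposition \ref{pro1-1}, so the work is the lower bound: every $F\subseteq E(G)$ with $|F|\leq n-k-1$ leaves a perfect matching in $H:=G-F$. I would argue by contradiction using Tutte's Theorem \ref{th2-1}. If $H$ has no perfect matching there is $S\subseteq V(G)$ with $o(H-S)>|S|$, and since $n$ is even a parity argument upgrades this to $o(H-S)\geq |S|+2$. Write $s=|S|$, let $C_1,\dots,C_t$ (with $t\geq s+2$) be the odd components of $H-S$ of sizes $a_1,\dots,a_t$, and set $m=n-k$. The crucial local estimate is that for $v\in C_i$ every $G$-edge from $v$ leaving $C_i\cup S$ must lie in $F$ (otherwise it would join two components of $H-S$); as $v$ has at most $a_i-1$ neighbours in $C_i$ and at most $s$ in $S$, it contributes at least $m+1-a_i-s$ edges to $F$. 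Summing over a component gives a deficiency bound of at least $a_i(m+1-s-a_i)$, which is $\geq m-s$ whenever $1\leq a_i\leq m-s$.

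The main obstacle is that a single large odd component contributes negatively to the naive global sum of deficiencies, so one cannot simply add the per-component estimates. The structural observation that resolves this is that there is at most one \emph{big} component, where I call $C_i$ big if $a_i\geq m+1-s$: counting total size, $b$ big components together with $t-b\geq s+2-b$ small ones force $b(m-s)\leq n-2s-2$, and the hypothesis $n\geq 4k+6$ makes the right side smaller than $2(m-s)$, so $b\leq 1$. I then split into two cases. If $b=0$, all $t\geq s+2$ components are small, so summing deficiencies and noting each edge of $F$ is counted at most twice yields $2|F|\geq (s+2)(m-s)$, and using $s\leq m-2$ one checks $(s+2)(m-s)>2(m-1)$, contradicting $|F|\leq m-1$. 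If $b=1$, the lone big component has size $\geq m+1-s$, so the total size $A$ of the small components satisfies $A\leq k-1$ and $A\geq s+1$; counting the pairwise distinct $F$-edges running from the small components to the big component and the even part gives $|F|\geq A(m+1-s-A)\geq (s+1)(m-2s)$, and again $n\geq 4k+6$ forces this to exceed $m-1$.

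In both cases we reach $|F|\geq m=n-k$, contradicting $|F|\leq n-k-1$; hence $H$ has a perfect matching, which proves $mp(G)\geq n-k$ and completes the Key Lemma. I expect the delicate points to be bookkeeping rather than conceptual: the parity step, the convexity fact that $a(m+1-s-a)\geq m-s$ on $1\leq a\leq m-s$, the distinctness versus double-counting of the relevant $F$-edges in the two cases, and the verification that the threshold $n\geq 4k+6$ is exactly what forces at most one big component while keeping the governing quadratics positive. These are the places where the extra analysis is concentrated, but each reduces to an elementary inequality.
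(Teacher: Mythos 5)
First, a point you could not have known: this paper does not prove Theorem \ref{lem2-1} at all --- it imports the statement from \cite{WangMaoChengZou} --- so there is no internal proof to measure you against; the closest in-paper material is the proofs of the fractional analogues (Proposition \ref{pro2-5}, Corollary \ref{cor2-5}, Theorem \ref{th2-7}). Your outer skeleton coincides with what the paper does there: the forward direction carries all the content, and the converse follows from $mp(G)\leq\delta(G)$ (Proposition \ref{pro1-1}) plus the bootstrap on $k'<k$, exactly the device used in Corollary \ref{cor2-5} and Theorem \ref{th2-7}. Where you genuinely diverge is the core lower bound: the paper's analogues are proved by Dirac/Ore degree arguments (if no vertex is heavily incident with $F$, the surviving graph is Hamiltonian; otherwise peel off a matched pair and repeat), whereas you run Tutte's Theorem \ref{th2-1} through a deficiency count over odd components. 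I checked the bookkeeping you deferred, and it does all close, with the hypothesis $n\geq 4k+6$ used where you said: writing $m=n-k$ and $s=|S|$, parity gives $o(H-S)\geq s+2$, hence $s\leq (n-2)/2$ and $m-s\geq k+4>0$; then $b(m-s)\leq n-2s-2<2(m-s)$ forces $b\leq 1$. For $b=0$, the concave function $(s+2)(m-s)$ equals $2m$ at both ends of $[0,m-2]$, so $2|F|\geq 2m>2(m-1)$, a contradiction. For $b=1$, one must note (you only implied it) that $A(m+1-s-A)$ is increasing on $[s+1,k-1]$ because its vertex satisfies $(m+1-s)/2\geq (2k+9)/2>k-1$; then $|F|\geq (s+1)(m-2s)$, whose minimum over $s\in[0,k-2]$ is $\min\{m,\ (k-1)(n-3k+4)\}$, and $(k-1)(n-3k+4)-(n-k-1)=(k-2)n-3k^2+8k-3\geq 1$ for $k\geq 2$ under $n\geq 4k+6$, while for $k=1$ the case $b=1$ is vacuous since $s+1\leq A\leq k-1$ is impossible. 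The margins are exactly $1$ in the extreme cases ($s=0$, and $k=2$), so these verifications are not optional, but they hold. Net comparison: your Tutte-based count yields a self-contained combinatorial proof with no appeal to Hamiltonicity theorems, at the price of delicate quadratic estimates; the peeling style the paper uses for its own analogues (and presumably the proof in \cite{WangMaoChengZou}) trades that arithmetic for repeated invocations of Dirac/Ore.
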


For an even graph $G$, we have $mp(G)\leq fmp(G)\leq \delta(G)$, and hence
the following corollary is immediate.
\begin{corollary}\label{cor2-5}
Let $n,k$ be two integers with $n\geq 4k+6$, and let $G$ be an even
graph of order $n$. Then $fmp(G)=n-k$ if and only if
$\delta(G)=n-k$.
\end{corollary}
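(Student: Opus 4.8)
The plan is to sandwich $fmp(G)$ between $mp(G)$ and $\delta(G)$ and then feed the equality $mp(G)=n-k$ supplied by Theorem \ref{lem2-1} into the squeeze. The key inequality, which is the one recorded in the sentence just preceding the statement, is
$$
mp(G)\leq fmp(G)\leq \delta(G)
$$
for every even graph $G$. The left-hand inequality is exactly Observation \ref{obs1-1}(3). The right-hand inequality holds because deleting the $\delta(G)$ edges incident with a vertex $v$ of minimum degree isolates $v$ in the resulting graph, and an isolated vertex forbids a fractional perfect matching: taking $S=\emptyset$ in Theorem \ref{th2-2} gives $i(G-S)\geq 1>0=|S|$. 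So I would open the proof by stating this chain explicitly.

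For the forward direction I would assume $\delta(G)=n-k$. Since $n\geq 4k+6$, Theorem \ref{lem2-1} applies and yields $mp(G)=n-k$. Plugging into the chain gives $n-k=mp(G)\leq fmp(G)\leq \delta(G)=n-k$, so $fmp(G)=n-k$. This half is genuinely immediate. For the reverse direction I would assume $fmp(G)=n-k$. From $fmp(G)\leq \delta(G)$ I get $\delta(G)\geq n-k$ at once, so it remains to exclude $\delta(G)>n-k$. Suppose $\delta(G)=n-k'$ with $k'=n-\delta(G)$; the assumption $\delta(G)>n-k$ means $k'\leq k-1$, and since a simple graph has $\delta(G)\leq n-1$ we also have $k'\geq 1$. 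Now I apply Theorem \ref{lem2-1} with the parameter $k'$ in place of $k$: it gives $mp(G)=n-k'$, whence $fmp(G)\geq mp(G)=n-k'\geq n-(k-1)=n-k+1>n-k$, contradicting $fmp(G)=n-k$. Therefore $\delta(G)=n-k$.

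The one step that requires care, and which I expect to be the only real obstacle, is checking that Theorem \ref{lem2-1} is actually applicable with the smaller parameter $k'$ in the reverse direction: its hypothesis demands $n\geq 4k'+6$. This is where the slack in $n\geq 4k+6$ is used, since $k'\leq k-1$ forces $4k'+6\leq 4(k-1)+6=4k+2<4k+6\leq n$, so the hypothesis is comfortably satisfied. I would make this inequality explicit so that the reader sees why the bound $n\geq 4k+6$ (rather than merely $n\geq 4k$) is what lets the argument run for every admissible value of $k'$.
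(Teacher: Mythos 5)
Your proof is correct and takes essentially the same route as the paper's: both directions rest on the chain $mp(G)\leq fmp(G)\leq \delta(G)$ combined with Theorem~\ref{lem2-1}, and your reverse-direction contradiction with $\delta(G)=n-k'$, $k'\leq k-1$, is precisely the paper's argument with $\delta(G)=n-(k-t)$, $t\geq 1$. Your explicit justification of $fmp(G)\leq\delta(G)$ (isolating a minimum-degree vertex and invoking Theorem~\ref{th2-2}) and the explicit check that $n\geq 4k'+6$ merely spell out steps the paper leaves implicit.
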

\begin{proof}
Suppose $\delta(G)=n-k$, then by Theorem~\ref{lem2-1}, $mp(G)=n-k=\delta(G)$. Thus $fmp(G)=n-k$.
Conversely, we suppose $fmp(G)=n-k$. From $mp(G)\leq fmp(G)\leq \delta(G)$, we have $mp(G)\leq fmp(G)=n-k$ and $\delta(G)\geq n-k$. We want to show that $\delta(G)\leq n-k$. Otherwise, we can suppose that $\delta(G)>n-k$ and $\delta(G)=n-k+t=n-(k-t)$ where $t\geq 1$. Then $n\geq 4k+6>4(k-t)+6$, by
Theorem~\ref{lem2-1}, $mp(G)=\delta(G)=n-k+t$. Therefore $fmp(G)=n-k+t$, contradicting with $fmp(G)=n-k$. We obtain that $\delta(G)\leq n-k$, completing the proof.
\end{proof}

We now focus our attention on odd graphs.
\begin{proposition}\label{pro2-4}
Let $G$ be an odd graph of order $n\geq 7$. Then $fmp(G)=n-1$ if and
only if $G$ is a complete graph of order $n$.
\end{proposition}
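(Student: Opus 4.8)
The plan is to prove the two implications separately, using Theorem~\ref{th1-3} for the ``if'' part and a single ``isolate a vertex'' estimate for the ``only if'' part. For the ``if'' direction, suppose $G=K_n$ with $n\ge 7$ odd. Since $\delta(K_n)=n-1$, Theorem~\ref{th1-3} gives $fmp(K_n)=n-1$ directly, so nothing further is needed here; this is the only place where the hypothesis $n\ge 7$ is actually used.

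For the ``only if'' direction I would argue by contraposition: assuming $G\ne K_n$, I will show $fmp(G)\le n-2$, so that $fmp(G)=n-1$ forces $G=K_n$. The key auxiliary fact is that $fmp(H)\le \delta(H)$ for every graph $H$. To see this, pick a vertex $v$ of minimum degree and let $F$ be the set of the $\delta(H)$ edges incident with $v$; in $H-F$ the vertex $v$ is isolated, so taking $S=\emptyset$ we have $i((H-F)-S)=i(H-F)\ge 1>0=|S|$, and Theorem~\ref{th2-2} shows that $H-F$ has no fractional perfect matching. Hence $F$ is an FMP set and $fmp(H)\le|F|=\delta(H)$.

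It then remains to combine this with the elementary observation that $\delta(G)=n-1$ holds precisely when $G=K_n$ (a vertex of degree $n-1$ is joined to all others, so $\delta(G)=n-1$ makes every pair adjacent). Thus if $G\ne K_n$ then $\delta(G)\le n-2$, whence $fmp(G)\le\delta(G)\le n-2<n-1$, contradicting $fmp(G)=n-1$. Note that this direction needs neither $n\ge 7$ nor connectivity of $G$; only the forward direction relies on Theorem~\ref{th1-3}.

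I do not anticipate a serious obstacle: the entire content is the isolation bound $fmp(G)\le\delta(G)$ and its interaction with Theorem~\ref{th2-2}. The one point to state carefully is that the $S=\emptyset$ instance of Theorem~\ref{th2-2} is exactly what certifies that an isolated vertex destroys every fractional perfect matching, and that the hypothesis $n\ge 7$ enters only through Theorem~\ref{th1-3} to pin down $fmp(K_n)=n-1$.
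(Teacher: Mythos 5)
Your proof is correct and takes essentially the same route as the paper: the ``if'' direction is Theorem~\ref{th1-3} applied to $K_n$ (which is where $n\ge 7$ is used), and the ``only if'' direction is the bound $fmp(G)\le\delta(G)$ combined with the observation that $\delta(G)=n-1$ forces $G=K_n$. The only difference is that the paper cites Observation~\ref{obs1-1} for $\delta(G)\ge fmp(G)$, whereas you derive this isolation bound explicitly from Theorem~\ref{th2-2}; since that inequality does not literally appear in Observation~\ref{obs1-1}, your version is in fact the more carefully justified one.
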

\begin{proof}
From Theorem \ref{th1-2}, if $G$ is a complete graph of order $n$,
then $fmp(G)=n-1$. Conversely, we suppose $fmp(G)=n-1$. From
Observation \ref{obs1-1}, we have $\delta(G)\geq fmp(G)=n-1$, and
hence $G$ is a complete graph, as desired.
\end{proof}

\begin{proposition}\label{pro2-5}
Let $G$ be an odd graph of order $n\geq 8$. Then $fmp(G)=n-2$ if and
only if $\delta(G)=n-2$.
\end{proposition}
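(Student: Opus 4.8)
The plan is to prove both implications, with essentially all the work in the ``if'' direction. Throughout I will use the elementary bound $fmp(G)\le\delta(G)$ (delete the $\delta(G)$ edges at a minimum-degree vertex to isolate it, so that $G$ minus these edges fails the criterion of Theorem~\ref{th2-2} with $S=\emptyset$), exactly as in the proof of Proposition~\ref{pro2-4}. For the ``only if'' direction, suppose $fmp(G)=n-2$. Then $\delta(G)\ge fmp(G)=n-2$, so $\delta(G)\in\{n-2,n-1\}$. If $\delta(G)=n-1$ then $G=K_n$, and since $n\ge 9\ge 7$ Theorem~\ref{th1-3} gives $fmp(G)=n-1\ne n-2$, a contradiction. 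Hence $\delta(G)=n-2$.

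For the ``if'' direction, assume $\delta(G)=n-2$; equivalently $\overline{G}$ is a non-empty matching $M$, i.e.\ $G=K_n-M$. The upper bound $fmp(G)\le\delta(G)=n-2$ is immediate, so it remains to prove $fmp(G)\ge n-2$. Suppose not: using the characterization stated just after Theorem~\ref{th2-2}, there are $T\subseteq E(G)$ with $|T|\le n-3$ and $S\subseteq V(G)$ with $i\big(G-(S\cup T)\big)>|S|$. Write $s=|S|$, let $I$ be the set of vertices isolated in $G-(S\cup T)$, set $m=|I|$ so that $m\ge s+1$, and put $J=V(G)\setminus(S\cup I)$. To isolate $I$, every $G$-edge inside $I$ and every $G$-edge between $I$ and $J$ must lie in $T$; writing $a=|E(G[I])|$ and $b=|E_G[I,J]|$ we get $|T|\ge a+b$, while counting incidences gives $2a+b=\sum_{u\in I}\big(\deg_G(u)-|N_G(u)\cap S|\big)\ge m(n-2-s)\ge m(n-1-m)$ (using $s\le m-1$ and $\deg_G(u)\ge n-2$).

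From these two facts I extract two lower bounds. Since $a\le\binom{m}{2}$,
\[
|T|\ \ge\ (2a+b)-a\ \ge\ m(n-1-m)-\binom{m}{2}\ =:\ \phi(m),
\]
and since $G[I]$ is $K_m$ minus a sub-matching of $M$,
\[
|T|\ \ge\ a\ \ge\ \binom{m}{2}-\Big\lfloor\tfrac{m}{2}\Big\rfloor\ =:\ g(m).
\]
Thus $n-3\ge|T|\ge\max\{\phi(m),g(m)\}$, and the problem reduces to showing that this maximum exceeds $n-3$ for every integer $m$ with $1\le m\le n$. The function $\phi$ is a downward parabola with $\phi(1)=n-2>n-3$, so for the ``small'' range $m\le 1+\sqrt{2n-5}$ I would bound $\phi$ below by its endpoint values and check $\phi(m)>n-3$ there; for the ``large'' range $m>1+\sqrt{2n-5}$ the estimate $g(m)\ge\tfrac{m(m-2)}{2}>n-3$ holds directly. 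Confirming that these two ranges together cover all admissible $m$ with strict inequalities uses $n\ge 8$ (equivalently $n\ge 9$), the tightest instance being $n=9$, where one simply checks $m=1,\dots,9$ by hand ($\phi$ suffices for $m\le 4$ and $g$ for $m\ge 5$).

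I expect the main obstacle to be precisely this final case analysis. Each of $\phi(m)$ and $g(m)$ is individually too weak near the crossover $m\approx\sqrt{2n}$, so some care is needed to verify that their maximum nonetheless stays strictly above $n-3$ there, and to treat the borderline small values of $n$, where the margin is slimmest; this is exactly where the hypothesis $n\ge 8$ enters.
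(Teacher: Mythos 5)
Your proof is correct, but it follows a genuinely different route from the paper's. The paper proves $fmp(G)\ge n-2$ directly: for each $F\subseteq E(G)$ with $|F|=n-3$ it constructs a fractional perfect matching of $G-F$ via a degree dichotomy on $G[F]$ --- if every vertex has $F$-degree at most $\frac{n-5}{2}$, then $\delta(G-F)\ge\frac{n}{2}$ and Dirac's theorem (Theorem~\ref{thA}) yields a Hamiltonian cycle; otherwise it removes an edge $uv$ of $G-F$ at a vertex of large $F$-degree, repeats the dichotomy once in $G_1=G-\{u,v\}$, and in the final case reaches $G_2$, a copy of $K_{n-4}$ missing at most $\frac{n-3}{2}<n-5$ edges, which has a fractional perfect matching by Theorem~\ref{th1-3}; the matchings are then glued together as $f'\cup\{uv,st\}$. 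You instead argue contrapositively through the fractional Tutte criterion (Theorem~\ref{th2-2}), double-counting edges at the isolated set $I$ and exploiting that $\overline{G}$ is a matching, which avoids all Hamiltonicity machinery and replaces it by the optimization $\max\{\phi(m),g(m)\}>n-3$. That optimization, which you only sketch, does go through: $\phi$ is concave with $\phi(1)=n-2>n-3$, and for integers $2\le m\le 1+\sqrt{2n-5}$ the inequality $\phi(m)>n-3$ is equivalent to $2n>3m+4-\tfrac{2}{m-1}$, which follows from $t^3-3t^2-2t+2>0$ with $t=\sqrt{2n-5}\ge\sqrt{13}$ (this cubic is positive at $t=\sqrt{13}$ since $11\sqrt{13}>37$, and increasing for such $t$), while for $m>1+\sqrt{2n-5}$ your bound $g(m)\ge\tfrac{m(m-2)}{2}>n-3$ applies; hence every integer $m\in[1,n]$ forces $|T|>n-3$, the desired contradiction, for all odd $n\ge 9$. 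Comparing what each approach buys: yours is purely combinatorial and self-contained modulo Theorem~\ref{th2-2}, but it leans on $\overline{G}$ having maximum degree $1$, which is special to the case $k=2$; the paper's peeling argument is the template that scales essentially verbatim to Theorem~\ref{th2-7} ($fmp(G)=n-k$ iff $\delta(G)=n-k$ when $n\ge 4k+5$), at the cost of invoking Dirac/Ore-type theorems and $fmp(K_n)=n-1$.
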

\begin{proof}
If $fmp(G)=n-2$, then $\delta(G)\geq fmp(G)=n-2$, and hence
$\delta(G)=n-2$ by Proposition \ref{pro2-4}. Conversely, if
$\delta(G)=n-2$, then $fmp(G)\leq n-2$. We need to show $fmp(G)\geq
n-2$. It suffices to prove that for every $F\subseteq E(G)$ and
$|F|=n-3$, $G-F$ has a fractional perfect matching. We first suppose
that $deg_{G[F]}(v)\leq \frac{n-5}{2}$ for every $v\in V(G)$, then
$$
deg_{G-F}(v)=deg_{G}(v)-deg_{G[F]}(v)\geq (n-2)-\frac{n-5}{2},
$$
and hence $\delta(G-F)\geq \frac{n}{2}$. From Theorem \ref{thA},
$G-F$ contains a Hamiltonian cycle, and hence there is a fractional
perfect matching in $G-F$. Next, we suppose that there exists a
vertex $v\in V(G)$ such that $deg_{G[F]}(v)\geq \frac{n-3}{2}$.
Since $deg_{G-F}(v)\geq deg_{G}(v)-|F|\geq 1$, it follows that there
exists a vertex $u\in V(G)$ such that $vu\in E(G-F)$. Let
$G_{1}=G-\{u,v\}$. Clearly, $|V(G_{1})|=n-2$ is odd, and $|F\cap
E(G_{1})|\leq n-3-\frac{n-3}{2}=\frac{n-3}{2}$, $\delta(G_{1})\geq
n-4$. If $deg_{G_{1}[F]}(x)\leq \frac{n-7}{2}$ for every $x\in
V(G_{1})$, then
$$
deg_{G_{1}-F}(v)\geq deg_{G_{1}}(v)-deg_{G_{1}[F]}(v)\geq
(n-4)-\frac{n-7}{2}=\frac{n-1}{2},
$$
and hence $\delta(G_{1}-F)> \frac{n-2}{2}$. By Theorem \ref{thA},
$G_{1}-F$ contains a Hamiltonian cycle, and hence there is a
fractional perfect matching in $G_{1}-F$, say $f'$. Clearly, $f'\cup
\{uv\}$ is a fractional perfect matching of $G-F$.

Suppose that there exists a vertex $s\in V(G_{1})$ such that
$deg_{G_{1}[F]}(s)\geq \frac{n-5}{2}$. Since $deg_{G_{1}-F}(s)\geq
deg_{G_{1}}(s)-|F|\geq 1$, it follows that there exists a vertex
$t\in V(G_{1})$ such that $st\in E(G_{1}-F)$. Let
$G_{2}=G_{1}-\{s,t\}$. Note that $|V(G_{2})|=n-4$ is odd, and
$|F\cap E(G_{2})|\leq n-3-\frac{n-3}{2}-\frac{n-5}{2}=1$,
$\delta(G_{2})\geq n-6$.

Observe that $G_{2}$ is a graph from $K_{n-4}$ by deleting at most
$\frac{n-5}{2}$ edges. Then $G_{2}-F$ is a graph from $K_{n-4}$
deleted at most $\frac{n-5}{2}+1=\frac{n-3}{2}<n-5$ edges for $n\geq
8$. By Theorem \ref{th1-3}, there is a fractional perfect matching
in $G_{2}-F$, say $f'$. Clearly, $f'\cup \{uv,st\}$ is a fractional
perfect matching of $G-F$.

We may now conclude that $fmp(G)=n-2$.
\end{proof}

\begin{theorem}\label{th2-7}
Let $n,k$ be two integers with $n\geq 4k+5$, and let $G$ be an odd
graph of order $n$. Then $fmp(G)=n-k$ if and only if
$\delta(G)=n-k$.
\end{theorem}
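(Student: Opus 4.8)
The plan is to prove the two implications separately, reducing the ``only if'' direction to the ``if'' direction exactly as in Corollary~\ref{cor2-5}, and then to concentrate the real work on the ``if'' direction. First recall that $fmp(G)\le\delta(G)$ for every graph: deleting the $\delta(G)$ edges at a minimum-degree vertex isolates it, and an isolated vertex violates the criterion of Theorem~\ref{th2-2} with $S=\emptyset$. Hence $\delta(G)=n-k$ forces $fmp(G)\le n-k$, and $fmp(G)=n-k$ forces $\delta(G)\ge n-k$. For the ``only if'' direction, suppose $fmp(G)=n-k$ but $\delta(G)=n-(k-t)$ with $t\ge1$ (note $k-t\ge1$ since $\delta(G)\le n-1$); because $n\ge 4k+5\ge 4(k-t)+5$, applying the ``if'' direction with parameter $k-t$ gives $fmp(G)=n-(k-t)>n-k$, a contradiction. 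So it remains to prove: if $\delta(G)=n-k$, then $fmp(G)\ge n-k$, i.e.\ every $F\subseteq E(G)$ with $|F|=n-k-1$ leaves a fractional perfect matching in $G-F$.

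For this I would generalize the iterative scheme of Proposition~\ref{pro2-5}, peeling off $K_2$'s at high-$F$-degree vertices and finishing with Dirac's theorem. Set $G^{(0)}=G$, $F^{(0)}=F$, write $n^{(j)}=n-2j$, and call a vertex $x$ of $G^{(j)}$ \emph{heavy} if $deg_{G^{(j)}[F^{(j)}]}(x)\ge (n^{(j)}-2k+1)/2$. While a heavy vertex $v$ exists, observe that $deg_{G^{(j)}-F^{(j)}}(v)\ge deg_{G^{(j)}}(v)-|F^{(j)}|>0$ (this is where the shrinking budget $|F^{(j)}|$ is used), choose a neighbour $u$ with $uv\in E(G^{(j)}-F^{(j)})$, pass to $G^{(j+1)}=G^{(j)}-\{u,v\}$ and $F^{(j+1)}=F^{(j)}\cap E(G^{(j+1)})$, and record the edge $uv$. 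Since $\delta(G^{(j)})\ge n^{(j)}-k$ and each peel deletes at least $(n^{(j)}-2k+1)/2$ edges of $F$, the budget collapses: one computes $|F^{(1)}|\le (n-3)/2$ and $|F^{(2)}|\le k-1$. The crucial point is that the process halts after at most two peels, uniformly in $k$: a heavy vertex at step $2$ would need $F$-degree at least $(n-2k-3)/2$, yet $deg_{G^{(2)}[F^{(2)}]}(v)\le|F^{(2)}|\le k-1$, and $(n-2k-3)/2>k-1$ precisely because $n\ge4k+5>4k+1$.

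When the process stops, say after $m\le2$ peels, every vertex of the terminal graph satisfies $deg_{G^{(m)}[F^{(m)}]}(x)\le (n^{(m)}-2k-1)/2$, so that
$$
\delta\bigl(G^{(m)}-F^{(m)}\bigr)\ \ge\ (n^{(m)}-k)-\frac{n^{(m)}-2k-1}{2}\ =\ \frac{n^{(m)}+1}{2}\ >\ \frac{n^{(m)}}{2}.
$$
As $n^{(m)}=n-2m\ge n-4\ge5$ is odd, Theorem~\ref{thA} supplies a Hamiltonian cycle in $G^{(m)}-F^{(m)}$. Together with the at most two recorded edges $uv\in E(G-F)$, the partition consisting of $V(G^{(m)})$ (inducing the Hamiltonian graph $(G-F)[V(G^{(m)})]=G^{(m)}-F^{(m)}$ on an odd number of vertices) and these $K_2$'s is exactly of the form demanded by Theorem~\ref{th2-3}(3), so $G-F$ has a fractional perfect matching.

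I expect the main obstacle to be the uniform budget bookkeeping: verifying $deg_{G^{(j)}-F^{(j)}}(v)\ge1$ at each step and, above all, pinning down the bound $|F^{(2)}|\le k-1$ that forces termination after two peels for every $k$ at once. Once the hypothesis $n\ge4k+5$ is tied to the step-$2$ heaviness test, the remaining estimates are routine, but they must be carried out with $n$ (hence each $n^{(j)}$) odd, so that the half-integer thresholds land on integers.
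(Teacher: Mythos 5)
Your proof is correct; I checked the bookkeeping and it holds up: $|F^{(1)}|\le n-k-1-\tfrac{n-2k+1}{2}=\tfrac{n-3}{2}$, $|F^{(2)}|\le\tfrac{n-3}{2}-\tfrac{n-2k-1}{2}=k-1$, a heavy vertex at step $2$ would need $F$-degree $\tfrac{n-2k-3}{2}>k-1$ (equivalent to $n>4k+1$), the peeled edge always exists since $deg_{G^{(1)}}(v)-|F^{(1)}|\ge(n-k-2)-\tfrac{n-3}{2}=\tfrac{n-2k-1}{2}>0$, and the terminal Dirac bound $\delta(G^{(m)}-F^{(m)})\ge\tfrac{n^{(m)}+1}{2}$ is valid on the odd order $n^{(m)}\ge n-4\ge 5$. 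However, your route genuinely differs from the paper's in the key step. The paper peels \emph{at most once}: after removing one heavy vertex $v$ with an $(G-F)$-neighbour $u$, it finishes with Ore's theorem (Theorem \ref{th1-2}) rather than Dirac's, noting that any pair $s,t$ in $G_1=G-\{u,v\}$ has $F$-degrees summing to at most $|F\cap E(G_1)|+1\le\tfrac{n-1}{2}$, so $deg_{G_1-F}(s)+deg_{G_1-F}(t)\ge 2(n-k-2)-\tfrac{n-1}{2}=\tfrac{3n-4k-7}{2}\ge n-2$ once $n\ge 4k+3$; the degree-sum condition absorbs the whole residual budget in one stroke, with no second peel and no termination argument needed. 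Your scheme instead sticks to Dirac throughout, which is exactly the paper's own method for Proposition \ref{pro2-5} (the case $k=2$) generalized uniformly in $k$: the price is the second peel and the termination test tied to $n\ge 4k+5$, the gain is a single, more elementary Hamiltonicity criterion and an argument that literally extends the $k=2$ proof. In the converse direction you also simplify: the paper runs an induction on $k$ with Propositions \ref{pro2-4} and \ref{pro2-5} as base cases, whereas you reduce directly to the ``if'' direction with parameter $k-t$ (as the paper itself does for even graphs in Corollary \ref{cor2-5}), which is cleaner and avoids the induction scaffolding. Incidentally, your arithmetic is more careful than the source: the paper's bound $|F\cap E(G_1)|\le\tfrac{n-5}{2}$ should read $\tfrac{n-3}{2}$, and its ``$fmp(G)=n-k+t<n-k$'' should have the inequality reversed; neither slip affects the paper's argument, and neither occurs in yours.
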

\begin{proof}
Suppose $\delta(G)=n-k$. Then $fmp(G)\leq \delta(G)=n-k$. We need to
show $fmp(G)\geq n-k$. It suffices to prove that for every
$F\subseteq E(G)$ and $|F|=n-k-1$, $G-F$ has a fractional perfect
matching. We first suppose that $deg_{G[F]}(v)\leq \frac{n-2k-1}{2}$
for any $v\in V(G)$, then
$$
deg_{G-F}(v)=deg_{G}(v)-deg_{G[F]}(v)\geq
(n-k)-\frac{n-2k-1}{2}=\frac{n+1}{2},
$$
and hence $\delta(G-F)\geq \frac{n+1}{2}$, from From Theorem
\ref{thA}, $G-F$ contains a Hamiltonian cycle, and hence there is a
fractional perfect matching in $G-F$. Next, we suppose that there
exists a vertex $v\in V(G)$ such that $deg_{G[F]}(v)\geq
\frac{n-2k+1}{2}$. Since $deg_{G-F}(v)\geq deg_{G}(v)-|F|\geq 1$, it
follows that there exists a vertex $u\in V(G)$ such that $vu\in
E(G-F)$. Let $G_{1}=G-\{u,v\}$. Clearly, $|V(G_{1})|=n-2$ is odd, and
$|F\cap E(G_{1})|\leq n-k-1-\frac{n-2k+1}{2}=\frac{n-5}{2}$. Since
$|F\cap E(G_{1})|\leq \frac{n-5}{2}$, it follows that for every vertex
pair $s,t\in V(G_{1})$, $deg_{G-F}(s)+deg_{G-F}(t)\geq
2(n-k-2)-\frac{n-5}{2}-1\geq n$ since $n\geq 4k+5$. So $G_{1}-F$
contains a Hamiltonian cycle, and hence there is a fractional
perfect matching in $G_{1}-F$, say $f'$. Clearly, $f'\cup\{uv\}$ is
a fractional perfect matching of $G-F$. From the above argument, we
conclude that $fmp(G)=n-k$.

Conversely, we suppose $fmp(G)=n-k$, we want to show that
$\delta(G)=n-k$. Furthermore, by induction on $k$, we prove that
$fmp(G)=n-k$ if and only if $\delta(G)=n-k$. From Proposition
\ref{pro2-4} and \ref{pro2-5}, the result follows for $k=1,2$.
Suppose that the argument is true for every integer $k'(k'<k)$, that
is, $fmp(H)=n-k'$ if and only if $\delta(H)=n-k'$. For integer $k$,
it follows from Observation \ref{obs1-1} that $\delta(G)\geq
fmp(G)=n-k$. We need to show that $\delta(G)=n-k$. Assume, on the
contrary, that $\delta(G)>n-k$. Let $\delta(G)=n-k+t=n-(k-t)$, where
$t\geq 1$. Since $k-t<k$, it follows from the induction hypothesis
that $fmp(G)=n-k+t<n-k$, which contradicts $fmp(G)=n-k$. So
$\delta(G)=n-k$.
\end{proof}

\section{Extremal problems}

We now consider the three extremal problems that we stated in the
Introduction.

\subsection{Results for $s(n,k)$ and $g(n,k)$}

For general $k$, we have the following result for $s(n,k)$.
\begin{theorem}\label{th3-1}
Let $n,k$ be two positive integers such that $n=\ell\cdot(k+1)+r$,
where $0\leq r\leq k$ and $6\leq k\leq n-1$.

$(1)$ If $(a)$ or $(b)$ or $(c)$ holds, then
$$
s(n,k)=\frac{nk}{2},
$$
where
\begin{itemize}
\item[] \ \ \ $(a)$ $r=0$;

\item[] \ \ \ $(b)$ $r\geq 1$, $k+r$ is odd;

\item[] \ \ \ $(c)$ $r\geq 1$, $k+r$ is even, $k$ is even, and $\ell\geq 2$.
\end{itemize}

$(2)$ If $r\geq 1$, $k+r$ is even, $k$ is odd, and $\ell\geq 2$,
then
$$
s(n,k)=\frac{nk+1}{2}.
$$

$(3)$ If $r\geq 1$, $k+r$ is even, $\ell=1$ and $k\leq n-2$, then
$$
\left \lceil\frac{nk}{2}\right \rceil \leq s(n,k)\leq
\frac{(n-1)(k+2)}{2}.
$$
\end{theorem}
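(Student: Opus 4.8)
The plan is to establish a uniform lower bound and then matching (or near‑matching) constructions, with the constructions carrying the real content. First, the lower bound. I would record that $fmp(G)\le\delta(G)$ for every graph $G$: deleting the $\delta(G)$ edges at a vertex of minimum degree isolates that vertex, so by Theorem~\ref{th2-2} (with $S=\emptyset$) the resulting graph has no fractional perfect matching. Hence any $G$ of order $n$ with $fmp(G)=k$ satisfies $\delta(G)\ge k$, so $2|E(G)|=\sum_{v}deg_{G}(v)\ge nk$, and since $2|E(G)|$ is even we obtain $|E(G)|\ge\lceil nk/2\rceil$. Thus $s(n,k)\ge\lceil nk/2\rceil$ in all three parts; this equals $\tfrac{nk}{2}$ in part~(1) (where one checks $nk$ is even in each of (a),(b),(c)) and $\tfrac{nk+1}{2}$ in part~(2), since there $k$ and $r$ are both odd and hence $n$ is odd, making $nk$ odd.

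For the upper bounds I would build a disjoint union of $k$‑regular blocks (plus, in part~(2), a single vertex of degree $k+1$). Start from $\ell$ copies of $K_{k+1}$, each $k$‑regular with $fmp=k$ by Theorem~\ref{th1-3}. Write $r=\sum_{i=1}^{\ell}j_i$ with the $j_i\ge 0$ chosen as equal as possible, and replace the $i$‑th copy together with $j_i$ extra vertices by a $k$‑regular graph $H_i=\overline{R_i}$ on $k+1+j_i$ vertices, where $R_i$ is a triangle‑free $j_i$‑regular graph (in part~(2) one $R_i$ is taken triangle‑free and $j_i$‑regular except at a single vertex, producing the unique vertex of degree $k+1$). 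Such $R_i$ exist because $k+1+j_i\ge 2j_i$ and the case hypotheses supply the required parity of $(k+1+j_i)j_i$. Since each $H_i$ is $k$‑regular, the disjoint union $G=\bigcup_i H_i$ has order $n$ and exactly $\lceil nk/2\rceil$ edges, and by Observation~\ref{obs1-1}(5) we have $fmp(G)=\min_i fmp(H_i)$; as $\delta(H_i)=k$ forces $fmp(H_i)\le k$, it remains only to prove $fmp(H_i)\ge k$.

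This robustness statement is the heart of the argument: I must show that deleting any edge set $F$ with $|F|=k-1$ leaves a fractional perfect matching in $H_i$, i.e.\ (Theorem~\ref{th2-2}) that $i(H_i-F-S)\le|S|$ for every $S$. Writing $s=|S|$ and letting $I$ be the set of isolated vertices of $H_i-F-S$, I would suppose $|I|\ge s+1$ and reach a contradiction. With $W=V(H_i)\setminus(S\cup I)$, isolation of $I$ forces all of $E_{H_i}[I]$ and $E_{H_i}[I,W]$ into $F$, so $e_I+e_{IW}\le k-1$; combined with $k|I|=2e_I+e_{IW}+e_{IS}$ and the trivial bound $e_{IS}\le s|I|$ this gives $|I|(k-s)\le 2(k-1)$, which for $k\ge 6$ eliminates $|I|\ge s+1$ in all but a short list of near‑extremal pairs. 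For those I would use that $R_i$ is triangle‑free, so $H_i$ has independence number at most $2$; by Mantel's theorem $H_i[I]$ then contains at least $\binom{|I|}{2}-\lfloor|I|^{2}/4\rfloor$ edges, all of which must lie in $F$, forcing $k-1\ge\binom{|I|}{2}-\lfloor|I|^{2}/4\rfloor$ and contradicting $k\ge 6$. I expect verifying that these two bounds jointly cover every surviving pair $(s,|I|)$ to be the main obstacle. Observe that spreading the $j_i$ evenly keeps each block strictly smaller than $2k+1$ when $\ell\ge 2$; this is exactly why parts~(1c) and~(2) assume $\ell\ge 2$, since it avoids the extremal block on $2k+1$ vertices (the case $r=k$) where a triangle‑free regular complement need not exist.

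Finally, part~(3) is the regime $\ell=1$ with $k+r$ even, where all $r$ extra vertices must enter a single block of order $k+1+r\le 2k+1$; when $r=k$ this is precisely the extremal block the construction above cannot furnish, so only bounds are claimed. The lower bound $\lceil nk/2\rceil$ is as above, and for the upper bound $\tfrac{(n-1)(k+2)}{2}$ I would exhibit a cruder near‑$(k+2)$‑regular graph on $n-1$ vertices (again a complement of a sparse graph) of the stated size with $fmp=k$, not attempting to match the lower bound.
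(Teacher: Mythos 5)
Your lower bound is fine (it is the same degree argument the paper uses), but the heart of your construction rests on a claim that is simply false: it is not true that $fmp(\overline{R_i})\ge k$ for every triangle-free $j_i$-regular graph $R_i$ on $k+1+j_i$ vertices. Concretely, take $k=6$, $j_i=2$, and $R_i=C_4\cup C_5$ on $9$ vertices, so $H_i=\overline{C_4\cup C_5}$ is $6$-regular. The five vertices of the $C_5$-component induce $\overline{C_5}\cong C_5$ in $H_i$, i.e.\ exactly $5=k-1$ edges; delete these as $F$. In $H_i-F$ those five vertices are adjacent only to the four vertices of the $C_4$-component, so taking $S$ to be those four vertices gives $i(H_i-F-S)=5>4=|S|$, and by Theorem~\ref{th2-2} the graph $H_i-F$ has no fractional perfect matching. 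Hence $fmp(H_i)\le k-1$. Note that this is precisely the near-extremal pair $(s,|I|)=(k-2,k-1)=(4,5)$ you flagged: it meets your degree count $|I|(k-s)\le 2(k-1)$ with equality, and your Mantel bound gives only $\binom{5}{2}-\lfloor 25/4\rfloor=4\le k-1$, so the two bounds genuinely fail to cover it --- and the counterexample shows this is not a fixable bookkeeping issue but an actual failure of the claim unless you impose further structure on $R_i$ (for instance, connectivity of $R_i$ kills this particular configuration, but then you owe a new proof). Your part (3) is also not a proof: a ``near-$(k+2)$-regular graph on $n-1$ vertices'' does not even have order $n$, and no argument for $fmp\ge k$ is offered.

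For contrast, the paper avoids all of this by choosing blocks whose robustness is immediate: $k$-regular graphs that decompose into $k$ edge-disjoint perfect matchings (so that deleting any $k-1$ edges leaves a perfect matching intact, giving $fmp\ge mp\ge k$), together with $K_{k+1}$ (Theorem~\ref{th1-3}) and, for odd $k$, the graph $K_{k+2}$ minus a maximum matching, whose $fmp$ equals $k$ by Proposition~\ref{pro2-5}. For part (3) the paper takes such a $1$-factorable $k$-regular graph $F_7$ on $n-1$ vertices plus one vertex $v$ joined to all of $F_7$, and proves $fmp\ge k$ by a short triangle argument: after deleting $k-1$ edges some perfect matching $M$ of the surviving factors and an edge $uw\in M$ forming a triangle with $v$ yield a fractional perfect matching via weights $\tfrac12$ on the triangle. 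If you want to keep your complement-of-sparse-graph blocks, you must either restrict to choices of $R_i$ for which you can actually verify the Scheinerman--Ullman condition against all pairs $(s,|I|)$, or switch to the paper's factorization-based blocks, where the bound $fmp\ge k$ is one line.
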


\begin{proof}
If $r=0$, then let $G_n^1=\ell K_{k+1}$ and it follows from $(5)$ of
Observation \ref{obs1-1} and Theorem \ref{th1-3} that
$fmp(G_n^1)=k$, and hence $s(n,k)\leq \ell {k+1\choose
2}=\frac{nk}{2}$. Let $G$ be a graph of order $n$ and $fmp(G)=k$
such that $e(G)=|E(G)|$ is minimized. Then $\delta(G)\geq k$ and
$s(n,k)=e(G)\geq \frac{nk}{2}$. So $s(n,k)=\frac{nk}{2}$.

Since $k+r$ is odd, it follows that $k+r+1$ is even. Let $F_{1}$ be
a $k$-regular graph of order $k+r+1$ such that $F_{1}$ can be
decomposed into $k$ edge-disjoint perfect matchings, and let $F_{2}$
be the disjoint union of $(\ell-1)$ cliques of order $k+1$, that is,
$F_{2}=(\ell-1)K_{k+1}$. Let $G_n^1=F_{1}\cup F_{2}$. Then
$|V(G)|=(k+r+1)+(\ell-1)(k+1)=n$. Since $\delta(G_n^1)=k$, it
follows that $fmp(G_n^1)\leq \delta(G_n^1)=k$. Since $F_{1}$ can be
decomposed into $k$ edge-disjoint perfect matchings, it follows from
Observation \ref{obs1-1} that $fmp(F_1)\geq mp(F_1)\geq k$. From
$(5)$ of Observation \ref{obs1-1},
$fmp(G_n^1)=\min\{fmp(F_1),fmp(K_{k+1})\}=k$. Clearly,
$e(G)=\lceil\frac{nk}{2}\rceil$, implying $s(n,k)\leq
\lceil\frac{nk}{2}\rceil$. Since $fmp(G)=k$, it follows that
$\delta(G)\geq fmp(G)\geq k$, and hence $s(n,k)\geq \frac{nk}{2}$,
completing the proof for the case $k+r$ being odd.

Suppose $k+r$ is even. Then $k+r+1$ is odd. We define four
graphs:
\begin{itemize}
\item Let $F_3$ be a $k$-regular graph of order $k+r$ such that $F_{3}$
can be decomposed into $k$ edge-disjoint perfect matchings;

\item Let $F_{4}=(\ell-2)K_{k+1}$;

\item Let $F_{5}$ be a $k$-regular graph of order $k+2$ such that $F_{5}$
can be decomposed into $k$ edge-disjoint perfect matchings, where
$k$ is even;

\item Let $F_{6}$ be a graph obtained from $K_{k+2}$ by deleting a maximum matching, where $k$ is odd.
\end{itemize}

Suppose that $k$ is even and $\ell\geq 2$. Let $G_n^2=F_{3}\cup
F_{4}\cup F_{5}$. Then $|V(G)|=(k+r)+(\ell-2)(k+1)+(k+2)=n$. Since
$\delta(G_n^2)=k$, it follows that $fmp(G_n^2)\leq \delta(G_n^2)=k$.
Since $F_{5}$ can be decomposed into $k$ edge-disjoint perfect
matchings, it follows from Observation \ref{obs1-1} that
$fmp(F_5)\geq mp(F_5)\geq k$. From $(5)$ of Observation
\ref{obs1-1},
$fmp(G_n^2)=\min\{fmp(F_3),fmp(K_{k+1}),fmp(F_{5})\}=k$. Clearly,
$e(G)=\frac{nk}{2}$, implying $s(n,k)\leq \frac{nk}{2}$. Since
$fmp(G)=k$, it follows that $\delta(G)\geq fmp(G)\geq k$, and hence
$s(n,k)\geq \frac{nk}{2}$, completing the proof for $k$ is even and $\ell\geq 2$.

Suppose that $k$ is odd and $\ell\geq 2$. Let $G_n^3=F_{3}\cup
F_{4}\cup F_{6}$. Then $|V(G)|=(k+r)+(\ell-2)(k+1)+(k+2)=n$. Since
$\delta(G_n^3)=k$, it follows that $fmp(G_n^3)\leq \delta(G_n^3)=k$.
From Proposition \ref{pro2-5}, we have $fmp(F_6)=k$. From $(5)$ of
Observation \ref{obs1-1},
$fmp(G_n^3)=\min\{fmp(F_3),fmp(K_{k+1}),fmp(F_{6})\}=k$. Clearly,
$e(G)=\frac{nk+1}{2}$, implying $s(n,k)\leq \frac{nk+1}{2}$. Since
$fmp(G)=k$, it follows that $\delta(G)\geq fmp(G)\geq k$. Since
$n,k,r$ are all odd integers, it follows that $s(n,k)\geq
\frac{nk+1}{2}$, completing the proof for $k$ is odd and $\ell\geq 2$.

Suppose that $r\geq 1$, $k+r$ is even, $\ell=1$ and $k\leq n-2$.
Since $fmp(G)=k$, it follows that $\delta(G)\geq fmp(G)\geq k$, and
hence $s(n,k)\geq \lceil\frac{nk}{2}\rceil$. Let $F_7$ be a
$k$-regular graph of order $n-1$ such that $F_{7}$ can be decomposed
into $k$ edge-disjoint perfect matchings, and let $G_n^4$ be the
graph obtained from $F_7$ by adding a new vertex $v$ and then adding
all edges from $v$ to $F_7$.
\begin{claim}\label{Clm:1a}
$fmp(G_n^4)\geq k$.
\end{claim}
\noindent \emph{Proof.}
Assume, to the contrary, that $fmp(G_n^4)=k-1$. For any $X\subseteq
E(G_n^4)$ and $|X|=k-1$, we suppose $|X\cap E(F_{7})|=a$. Since
there are $k$ edge-disjoint perfect matchings in $F_7$, it follows
that there are $(k-a)$ edge-disjoint perfect matchings in $F_7-X$,
and hence $\delta(F_7-X)\geq k-a$. Let $F_7'$ be the graph obtained
from $(k-a)$ edge-disjoint perfect matchings. Since $k\leq n-2$, it
follows that there is at least one edge, say $vu$, from $v$ to
$F_{7}$ in $G_n^4-X$. Then $|N_{F_7'}(u)|=k-a$ and there is at least
one edge, say $vw$, from $v$ to $N_{F_7'}(u)$. Then $uvwu$ is a
triangle, and $uw$ is an edge of a perfect matching $M$ of $F_7'$.
It is clear that $(M-uw)\cup uvwu$ is fractional perfect matching in
$F_7'$, and hence $fmp(G_n^4)\geq k$, a contradiction. \hfill $\Diamond$

By Claim~\ref{Clm:1a}, we have $fmp(G_n^4)\geq k$. Then $s(n,k)\leq
e(G)=\frac{(n-1)(k+2)}{2}$, as desired.
\end{proof}

Note that $g(n,k)=s(n,k+1)-1$. Then we have the following result for
$g(n,k)$.

\begin{corollary}\label{cor3-2}
Let $n,k$ be two positive integers such that $n=\ell\cdot(k+2)+r$,
where $0\leq r\leq k+1$ and $k\geq 5$.

$(1)$ If $(a)$ or $(b)$ or $(c)$ holds, then
$$
g(n,k)=\frac{n(k+1)}{2}-1,
$$
where $(a)$ $r=0$; $(b)$ $r\geq 1$, $k+r+1$ is odd; $(c)$ $r\geq 1$,
$k+r+1$ is even, $k$ is odd, and $\ell\geq 2$.

$(2)$ If $r\geq 1$, $k+r+1$ is even, $k$ is even, and $\ell\geq 2$,
then
$$
g(n,k)=\frac{n(k+1)+1}{2}-1.
$$

$(3)$ If $r\geq 1$, $k+r+1$ is even, $\ell=1$ and $k\leq n-3$, then
$$
\left \lceil\frac{n(k+1)}{2}\right \rceil-1 \leq g(n,k)\leq
\frac{(n-1)(k+3)}{2}-1.
$$
\end{corollary}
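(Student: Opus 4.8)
The plan is to deduce the corollary directly from Theorem~\ref{th3-1} through the identity $g(n,k)=s(n,k+1)-1$ recorded just before the statement, so that the only real work is justifying that identity and then translating the hypotheses and formulas of Theorem~\ref{th3-1} under the substitution $k\mapsto k+1$. First I would establish the identity. By definition $g(n,k)$ is the largest $m$ such that every order-$n$ graph with at most $m$ edges has $fmp(G)\le k$; equivalently $g(n,k)+1$ is the least number of edges of an order-$n$ graph with $fmp(G)\ge k+1$. I would then show this least value equals $s(n,k+1)$, i.e.\ that an edge-minimal graph with $fmp\ge k+1$ can be taken to have $fmp$ exactly $k+1$. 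Indeed, by parts $(1)$ and $(2)$ of Observation~\ref{obs1-1} we have $fmp(G-e)\le fmp(G)$ and $fmp(G-e)\ge fmp(G)-1$, so deleting one edge lowers $fmp$ by $0$ or $1$; starting from a graph with $fmp=j>k+1$ and deleting edges one at a time, $fmp$ must pass through the value $k+1$ at some graph with strictly fewer edges, contradicting edge-minimality unless $j=k+1$. Hence $g(n,k)+1=s(n,k+1)$.

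With the identity in hand, I would set $k'=k+1$ and invoke Theorem~\ref{th3-1} for $k'$. The decomposition $n=\ell\cdot(k+2)+r=\ell\cdot(k'+1)+r$ with $0\le r\le k+1=k'$ is exactly the hypothesis there, the bound $k\ge 5$ is $k'\ge 6$, and $k'\le n-1$ holds automatically since $n\ge k+2$. I would then read off $s(n,k+1)$ in each regime and subtract $1$: the regimes with $s(n,k+1)=\frac{n(k+1)}{2}$ give $g(n,k)=\frac{n(k+1)}{2}-1$; the regime with $s(n,k+1)=\frac{n(k+1)+1}{2}$ gives $g(n,k)=\frac{n(k+1)+1}{2}-1$; and the range regime gives $\lceil\frac{n(k+1)}{2}\rceil-1\le g(n,k)\le \frac{(n-1)(k+3)}{2}-1$, where the upper bound uses $(k'+2)=k+3$.

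The only point requiring genuine care is matching the case hypotheses under $k'=k+1$, which I would spell out. The parity of $k'+r=k+r+1$ is precisely what appears in the corollary, the condition ``$k'$ even'' becomes ``$k$ odd'' and ``$k'$ odd'' becomes ``$k$ even'', and the structural constraints ``$\ell\ge2$'' and ``$\ell=1,\ k'\le n-2$'' become ``$\ell\ge2$'' and ``$\ell=1,\ k\le n-3$''. Checking these against the three cases of Theorem~\ref{th3-1} reproduces cases $(1)$, $(2)$, $(3)$ of the corollary verbatim. I anticipate no substantive obstacle, since all the content sits in Theorem~\ref{th3-1}; the main risk is an off-by-one or a flipped parity in the substitution, so I would verify each case boundary---especially that $r\ge1$ and the evenness of $k+r+1$ are preserved---before concluding.
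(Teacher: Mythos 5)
Your proposal is correct and takes essentially the same route as the paper: the paper derives the corollary by simply asserting the identity $g(n,k)=s(n,k+1)-1$ and reading off the three cases of Theorem~\ref{th3-1} under the substitution $k\mapsto k+1$, exactly as you do. Your extra justification of the identity via Observation~\ref{obs1-1} (deleting an edge changes $fmp$ by $0$ or $1$, so an edge-minimal graph with $fmp\geq k+1$ has $fmp$ exactly $k+1$) is sound and supplies a detail the paper leaves implicit.
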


For $k=0,1$, we have the following result.
\begin{proposition}\label{pro3-1}
Let $n,k$ be two positive integers such that $n\geq 3$ is odd. Then

$(1)$ $s(n,0)=0$;

$(2)$ $s(n,1)=\frac{n+3}{2}$.
\end{proposition}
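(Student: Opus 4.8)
The plan is to handle the two parts separately. Part (1) is essentially immediate, part (2) needs a matching upper-bound construction together with a lower-bound counting argument driven by the structure theorem for fractional perfect matchings; the lower bound is where the real work lies.

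For part (1), I would exhibit the edgeless graph $\overline{K_n}$. Applying Theorem~\ref{th2-2} with $S=\emptyset$ gives $i(\overline{K_n})=n>0=|S|$, so $\overline{K_n}$ has no fractional perfect matching and hence $fmp(\overline{K_n})=0$. Since no graph has fewer than $0$ edges, this already forces $s(n,0)=0$.

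For the upper bound in part (2), I would take $G=C_3\cup \tfrac{n-3}{2}K_2$, the disjoint union of a triangle with $(n-3)/2$ independent edges; this is well defined because $n$ is odd, and it has exactly $3+\tfrac{n-3}{2}=\tfrac{n+3}{2}$ edges. Assigning weight $1/2$ to each triangle edge and weight $1$ to each $K_2$-edge produces a fractional perfect matching, so $fmp(G)\geq 1$. Deleting any single triangle edge turns that component into a path $P_3$, which has no fractional perfect matching (apply Theorem~\ref{th2-2} with $S$ the middle vertex, giving $i(P_3-S)=2>1$); by $(5)$ of Observation~\ref{obs1-1} the whole graph then loses its fractional perfect matching, so $fmp(G)\leq 1$. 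Hence $fmp(G)=1$ and $s(n,1)\leq \tfrac{n+3}{2}$.

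The heart of the proof, and the step I expect to be most delicate, is the lower bound $s(n,1)\geq \tfrac{n+3}{2}$. Let $G$ be any graph of order $n$ with $fmp(G)=1$; in particular $G$ has a fractional perfect matching, so by Theorem~\ref{th2-3}$(3)$ there is a partition of $V(G)$ into parts each inducing $K_2$ or a Hamiltonian graph on an odd number of vertices. Writing $a$ for the number of $K_2$-parts and $m_1,\dots,m_b\geq 3$ for the orders of the odd parts, I would use that these induced subgraphs have pairwise disjoint edge sets and that each odd Hamiltonian part on $m_j$ vertices contains a spanning cycle with $m_j$ edges, to get $|E(G)|\geq a+\sum_{j=1}^b m_j$. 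Substituting the vertex count $n=2a+\sum_{j=1}^b m_j$ yields $|E(G)|\geq n-a$. Finally, since $n$ is odd the number $b$ of odd parts is odd, so $b\geq 1$ and $\sum_{j=1}^b m_j\geq 3$, which forces $a\leq (n-3)/2$ and hence $|E(G)|\geq n-a\geq \tfrac{n+3}{2}$. The one thing to get right is precisely this parity bookkeeping: it is the forced existence of an odd Hamiltonian part of order at least $3$ that lifts the bound above the naive degree estimate $\lceil n/2\rceil$ and pins it at $\tfrac{n+3}{2}$. Combining the two bounds gives $s(n,1)=\tfrac{n+3}{2}$.
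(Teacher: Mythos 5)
Your proposal is correct and takes essentially the same approach as the paper: the identical extremal graph $K_3\cup\tfrac{n-3}{2}K_2$ for the upper bound (you delete a triangle edge rather than a $K_2$-edge, an immaterial difference), and the lower bound via the partition characterization of Theorem~\ref{th2-3}. Your explicit counting and parity argument ($|E(G)|\geq n-a$ with $a\leq\tfrac{n-3}{2}$ forced by the odd order) simply fills in the details that the paper compresses into a single sentence.
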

\begin{proof}
$(1)$ Let $H_1$ be the graph of order $n$ with no edges. Clearly,
$fmp(H_1)=0$. Then $s(n,0)\leq 0$, and so $s(n,0)=0$.

$(2)$ Let $H_2$ be a graph of order $n$ with $\frac{n-3}{2}$
independent edges and a triangle, that is, $H_2=\frac{n-3}{2}K_2\cup
K_3$. From Theorem \ref{th2-3}, we have $fmp(H_2)\geq 1$. By
deleting one edge in $\frac{n-3}{2}K_2$, there is no FMP in the
resulting graph, and so $fmp(H_2)=1$. Then $s(n,1)\leq
\frac{n+3}{2}$. Conversely, let $G$ be an odd graph of order $n$
with $fmp(G)=1$. Since $G$ contains a FMP, it follows from Theorem
\ref{th2-3} that $G$ has at least $\frac{n+3}{2}$ edges, and hence
$s(n,1)\geq \frac{n+3}{2}$. So $s(n,1)=\frac{n+3}{2}$.
\end{proof}

\begin{theorem}\label{lem3-3}
Let $n,k$ be two positive integers such that $n\geq 9$ is odd. Then
$$
s(n,2)=n+3.
$$
\end{theorem}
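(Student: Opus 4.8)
The plan is to prove the two inequalities $s(n,2)\le n+3$ and $s(n,2)\ge n+3$ separately.

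For the upper bound I would exhibit a graph of order $n$ with exactly $n+3$ edges and fractional matching preclusion number $2$. Let $W$ be the wheel on five vertices (a $4$-cycle $1234$ together with a hub $c$ joined to all of $1,2,3,4$), and set $G^{*}=W\cup C_{n-5}$, where $C_{n-5}$ is an even cycle; this is legitimate since $n-5$ is even and $n-5\ge4$ as $n\ge 9$ is odd. First I show $fmp(W)=2$. Checking $W-e$ for a spoke and for a rim edge (the only two cases up to symmetry) shows that $W-e$ always admits a partition into a triangle and a $K_{2}$, hence has a fractional perfect matching; so no single edge is an FMP set and $fmp(W)\ge2$ by Theorem~\ref{th2-3}. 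On the other hand, deleting the two rim edges $23,34$ leaves $2,3,4$ pairwise nonadjacent, so taking $S=\{c,1\}$ gives $i(W-\{23,34\}-S)=3>2=|S|$, whence $fmp(W)\le2$ by Theorem~\ref{th2-2}. For the even cycle, $fmp(C_{n-5})=2$, since removing one edge leaves an even path (which has a perfect matching) while removing the two edges at one vertex isolates it. By $(5)$ of Observation~\ref{obs1-1}, $fmp(G^{*})=\min\{2,2\}=2$, and $|E(G^{*})|=8+(n-5)=n+3$, giving $s(n,2)\le n+3$.

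For the lower bound, let $G$ be any graph of order $n$ with $fmp(G)=2$; I must show $|E(G)|\ge n+3$. Since $fmp(G)=2>0$, $G$ has a fractional perfect matching and, as in the proof of Proposition~\ref{pro2-4}, $\delta(G)\ge fmp(G)=2$. By $(5)$ of Observation~\ref{obs1-1}, every component $H$ of $G$ satisfies $fmp(H)\ge2$. Because $n$ is odd, at least one component $H_0$ has odd order. Every component, being connected with $\delta\ge2$, has $|E(H)|\ge|V(H)|$, so the whole estimate reduces to the following claim: \emph{an odd-order connected graph $H$ with $fmp(H)\ge2$ satisfies $|E(H)|\ge|V(H)|+3$.} Granting the claim, summing $|E(H)|\ge|V(H)|$ over all components and adding the extra $3$ contributed by $H_0$ yields $|E(G)|\ge n+3$.

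To prove the claim, write $m=|V(H)|$ (odd) and suppose for contradiction $|E(H)|\le m+2$. If $\delta(H)\ge3$ then $2|E(H)|=\sum_v\deg_H(v)\ge 3m$, so $|E(H)|-m\ge(m+1)/2\ge3$ (note $m\ge5$, since the only odd connected graph on three vertices with $\delta\ge2$ is $K_3$, which has $fmp=1$), contradicting $|E(H)|\le m+2$. Hence $\delta(H)=2$, and $H$ is not an odd cycle (an odd cycle has $fmp=1$), so $H$ has a degree-$2$ vertex and a vertex of degree $\ge3$. The key tool is that $fmp(H)\ge2$ forces, for every degree-$2$ vertex $v$ with neighbours $a,b$, that both $H-\{v,a\}$ and $H-\{v,b\}$ have fractional perfect matchings (deleting $va$ forces $f(vb)=1$). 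I would then suppress all degree-$2$ vertices to obtain a multigraph $R$ on the at most four branch vertices whose cyclomatic number equals $|E(H)|-m+1\le3$, enumerate the few shapes of $R$ (a single cycle; the theta, figure-eight, and dumbbell shapes when the number is $2$; their tricyclic analogues when it is $3$), and in each shape exhibit a single edge $e$ so that the degree-$1$ cascade created by deleting $e$ forces, by the parity of $m$, an uncoverable odd sub-path, i.e.\ a set $S$ with $i(H-e-S)>|S|$. Then $H-e$ has no fractional perfect matching by Theorem~\ref{th2-2}, so $fmp(H)=1$, the desired contradiction.

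The hard part is precisely this last structural case analysis for $\delta(H)=2$ with cyclomatic number $2$ or $3$: one must check uniformly across the theta/figure-eight/dumbbell families (and their tricyclic counterparts), and over all admissible placements of the subdividing degree-$2$ vertices subject to $m$ being odd, that some single edge deletion always creates an odd path or an isolated vertex obstructing every fractional perfect matching. Since the wheel $W$ attains $|E(H)|=|V(H)|+3$, the bound is tight and there is no slack, so the parity bookkeeping in this step must be carried out carefully rather than by a crude degree count.
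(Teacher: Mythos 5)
Your upper bound is complete and correct, and is in fact tidier than the paper's: the single construction $W\cup C_{n-5}$ (wheel on five vertices plus an even cycle) works uniformly for all odd $n\geq 9$, whereas the paper must split into $n=4k+1$ and $n=4k+3$ and use two different graphs $H_3$, $H_4$ built from a nine-vertex gadget together with $4$-cycles and possibly a $6$-cycle. Your verification of $fmp(W)=2$ and $fmp(C_{n-5})=2$ via Theorems \ref{th2-2} and \ref{th2-3} and Observation \ref{obs1-1}$(5)$ is sound. The reduction of the lower bound to the claim that every odd-order connected graph $H$ with $fmp(H)\geq 2$ satisfies $|E(H)|\geq |V(H)|+3$ is also correct (and arguably cleaner bookkeeping than the paper, which fixes $e(G)=n+2$ and then quietly assumes the relevant component is all of $G$); the $\delta(H)\geq 3$ case, the exclusion of $K_3$ and of odd cycles, and the ``forced edge at a degree-$2$ vertex'' tool are all handled properly.

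The genuine gap is that the claim is never proved in the only hard case, $\delta(H)=2$ with cyclomatic number $2$ or $3$. What you give there is a plan --- suppress degree-$2$ vertices, enumerate the homeomorphism types of the resulting multigraph on at most four branch vertices, and in each type exhibit one edge whose deletion creates an obstruction in the sense of Theorem \ref{th2-2} --- and you explicitly defer its execution. But that enumeration and parity analysis \emph{is} the theorem: it is exactly what the paper's Cases 1--4 do (the graphs $A_1,A_2$, $B_1,\dots,B_4$, $D_1,\dots,D_4$, organized by $\Delta(G)\in\{6,5,4,3\}$ after first proving $\Delta(G)\leq 6$), and it occupies essentially the entire lower-bound argument there. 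Nor is the enumeration as small as ``a few shapes'': at cyclomatic number $3$ with up to four branch vertices there are roughly ten homeomorphism types, and for each one the choice of the killing edge genuinely depends on the parities of the subdivision-path lengths, so each type forces its own case split (this is visible in the paper, where within a single shape like $B_1$ or $D_1$ several parity subcases need different edges). Until that case analysis is actually written out and closed, the inequality $s(n,2)>n+2$, and hence the theorem, is not established.
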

\begin{proof}
Let $H_3$ be an odd graph of order $n$. If $n=4k+1$, let $H_3=F_1\cup
(k-2)C_4$, where $F_1$ is a graph obtained from two cycles of order
$4$, say $C_4^1=u_1u_2u_3u_4u_1,C_4^2=v_1v_2v_3v_4v_1$, by adding a
new $w$ and new edges in $\{wu_1,wu_4,wv_1,wv_4\}$; see Figure 1. By
deleting two edges in $\{wu_4,wv_4\}$, we have no FMP in the
resulting graph by Theorem \ref{th2-3}, and hence $fmp(H_3)\leq 2$.
For any edge $e$ in $H_3$, $H_3-e$ contains a $(2k-4)K_2\cup C_5\cup
C_4$ as its subgraph, and hence $fmp(H_3)\geq 2$. So $fmp(H_3)=2$
and $s(n,2)\leq n+3$.
\begin{figure}[!hbpt]
\begin{center}
\includegraphics[scale=0.7]{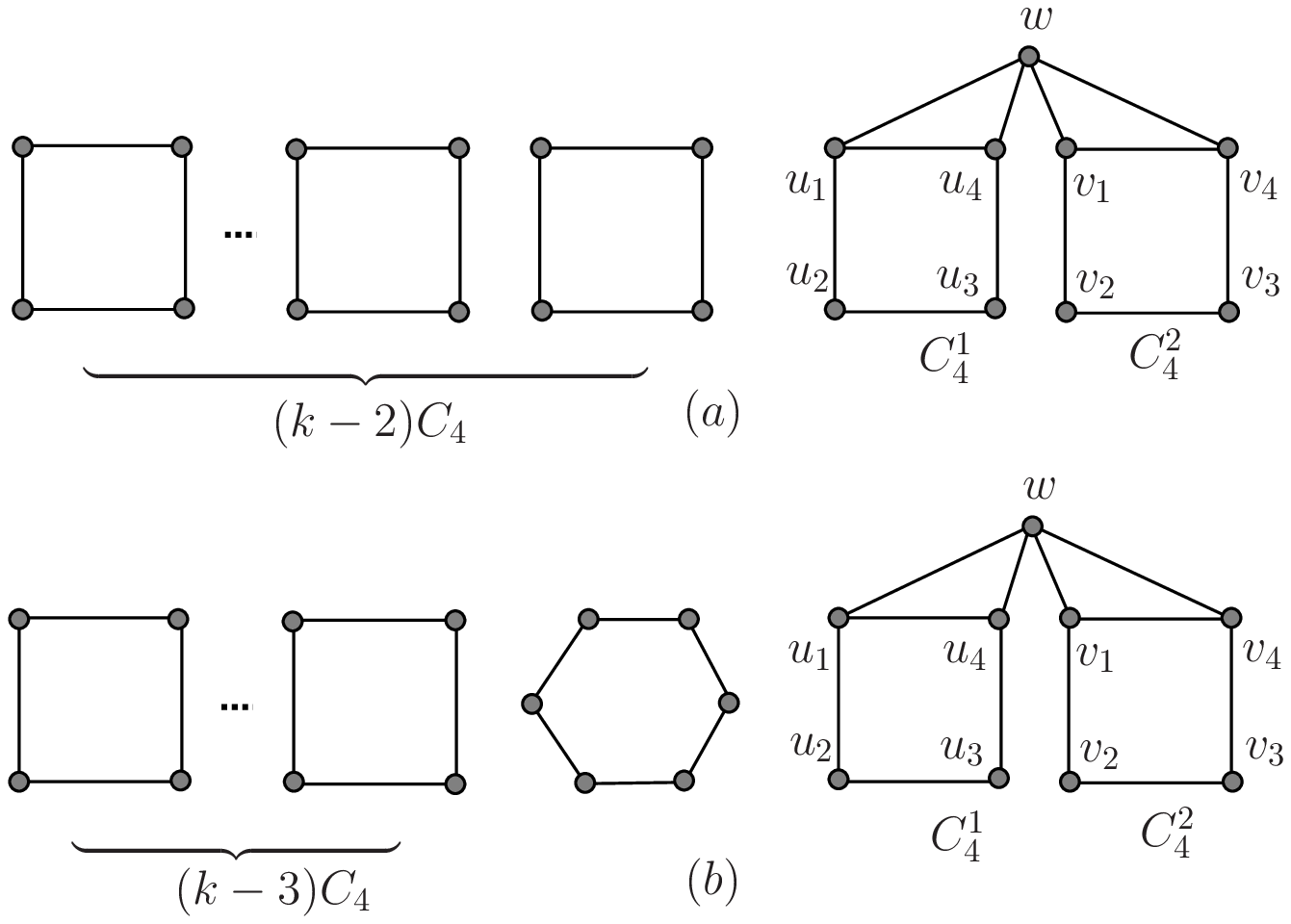}
\end{center}
\begin{center}
Figure 1: Graphs $H_3$ and $H_4$.
\end{center}\label{fig1}
\end{figure}

If $n=4k+3$, let $H_4=F_1\cup (k-3)C_4\cup C_6$, where $F_1$ is a graph
obtained from two cycles of order $4$, say
$C_4^1=u_1u_2u_3u_4u_1,C_4^2=v_1v_2v_3v_4v_1$, by adding a new $w$
and new edges in $\{wu_1,wu_4,wv_1,wv_4\}$; see Figure 1. By
deleting two edges in $\{wu_4,wv_4\}$, we have no FMP in the
resulting graph by Theorem \ref{th2-3}, and hence $fmp(H_4)\leq 2$.
For any edge $e$ in $H_4$, $H_4-e$ contains a $(2k-4)K_2\cup C_5\cup
C_4$ as its subgraph, and hence $fmp(H_4)\geq 2$. So $fmp(H_4)=2$
and $s(n,2)\leq n+3$.

It suffices to show $s(n,2)> n+2$. Suppose $s(n,2)\leq n+2$. Then
there exists an odd graph $G$ of order $n$ with $e(G)=n+2$ and
$fmp(G)=2$. It is clear that $\delta(G)\geq fmp(G)=2$.

\begin{claim}\label{Clm:1}
$\Delta(G)\leq 6$.
\end{claim}
\noindent \emph{Proof.}
Assume, on the contrary, that $\Delta(G)\geq 7$. Then there exists a
vertex of $G$, say $v$, such that $deg_{G}(v)\geq 7$.
\begin{eqnarray*}
e(G)&=&\frac{1}{2}\sum_{u\in
V(G)}deg_{G}(u)=\frac{1}{2}\left(deg_{G}(v)+\sum_{u\in V(G), \ u\neq
v}deg_{G}(u)\right)\\
&\geq&\frac{1}{2}(7+2(n-1))=\frac{1}{2}(2n+5)>n+2,
\end{eqnarray*}
a contradiction. \hfill $\Diamond$

From Claim \ref{Clm:1}, we have $\Delta(G)\leq 6$.
\begin{fact}\label{Clm:3}
If there exists a connected component of $G$, say $C$,
$\Delta(C)=2$, then $|V(C)|$ is even.
\end{fact}
We assume that there exists a connected component $C$ of $G$ such
that $\Delta(C)\leq 6$. We may assume that $C=G$, and this
assumption does not affect the validity of our proof. We distinguish
the following cases to show this theorem.

\textbf{Case 1.} $\Delta(G)=6$.

In this case, there exists a vertex of degree $6$ in $G$, say $u$.
Since $e(G)=n+2$, it follows that any vertex in $G-u$ has degree
$2$, and hence $G=A_1$ must be a graph obtained from $3$ cycles by
sharing exactly one vertex; see Figure 2 $(a)$. Clearly, $G-u$ is
the union of three paths, say $P^1=u_1u_2\ldots u_{\ell_1}$,
$P^2=v_1v_2\ldots v_{\ell_2}$ and $P^3=w_1w_2\ldots w_{\ell_3}$.
\begin{figure}[!hbpt]
\begin{center}
\includegraphics[scale=0.7]{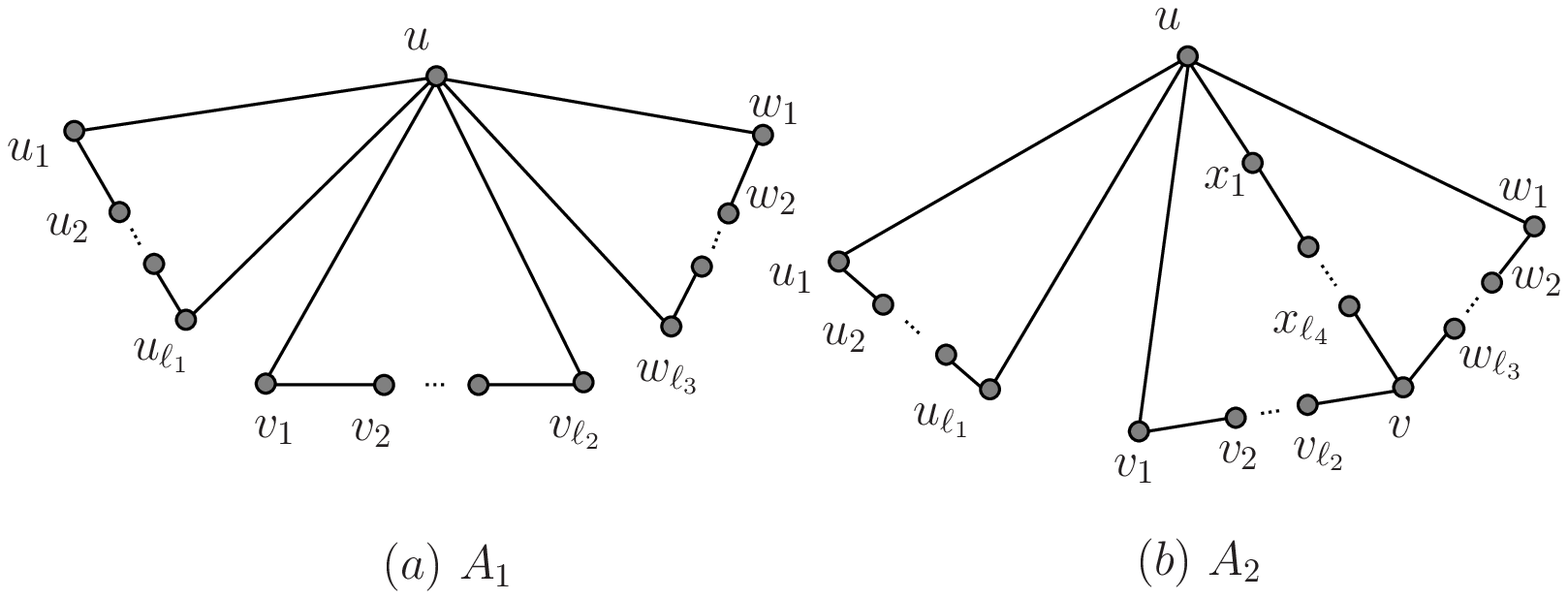}
\end{center}
\begin{center}
Figure 2: Graphs $A_1$ and $A_2$.
\end{center}\label{fig2}
\end{figure}

Since $|V(G)|$ is odd, it follows that $\ell_1+\ell_2+\ell_3$ is
even, and hence $\ell_1$ or $\ell_2$ or $\ell_3$ is even. Without
loss of generality, suppose $\ell_1$ is even. From Theorem
\ref{th2-3}, $G-u_1u_2$ has no FMP, implying $fmp(G)\leq 1$,
contradicting to the fact that $fmp(G)=2$.

\textbf{Case 2.} $\Delta(G)=5$.

Since $e(G)=n+2$, it follows that there exist two vertices $u,v$
such that $deg_G(u)=5$ and $deg_G(v)=3$. Then $G=A_2$, as shown in
Figure 2 $(b)$, where $\ell_{1},\ell_{2},\ell_{3},\ell_{4}$ are all
integers and $\ell_{1}+\ell_{2}+\ell_{3}+\ell_{4}=n-2$. Since
$|V(G)|$ is odd, it follows that
$\ell_{1}+\ell_{2}+\ell_{3}+\ell_{4}$ is odd. We consider the
following two cases by the value of $\ell_{1}$.

If $\ell_{1}$ is odd, then $\ell_{2}+\ell_{3}+\ell_{4}$ is even, and
hence $\ell_{2}$ or $\ell_{3}$ or $\ell_{4}$ is even. If $\ell_{2}$
is even, then it follows from Theorem \ref{th2-3} that
$G-v_{\ell_{2}}v$ has no FMP, and hence $fmp(G)\leq 1$,
contradicting to $fmp(G)=2$. If $\ell_{3}$ is even, then it follows
from Theorem \ref{th2-3} that $G-w_{\ell_{3}}v$ has no FMP, and hence
$fmp(G)\leq 1$, contradicting to $fmp(G)=2$. If $\ell_{4}$ is even,
then it follows from Theorem \ref{th2-3} that $G-x_{\ell_{4}}v$ has
no FMP, and hence $fmp(G)\leq 1$, contradicting to $fmp(G)=2$.

If $\ell_{1}$ is even, then it follows from Theorem \ref{th2-3} that
$G-(u_{1}u_{2})$ has no FMP, and hence $fmp(G)\leq 1$, contradicting
to $fmp(G)=2$.

\textbf{Case 3.} $\Delta(G)=4$.

Since $e(G)=n+2$, it follows that $G$ has at most two vertices of
degree $4$. Then $G=B_1$ or $G=B_2$ or $G=B_3$ or $G=B_4$; see Figure 3.

\begin{figure}[!hbpt]
\begin{center}
\includegraphics[scale=0.8]{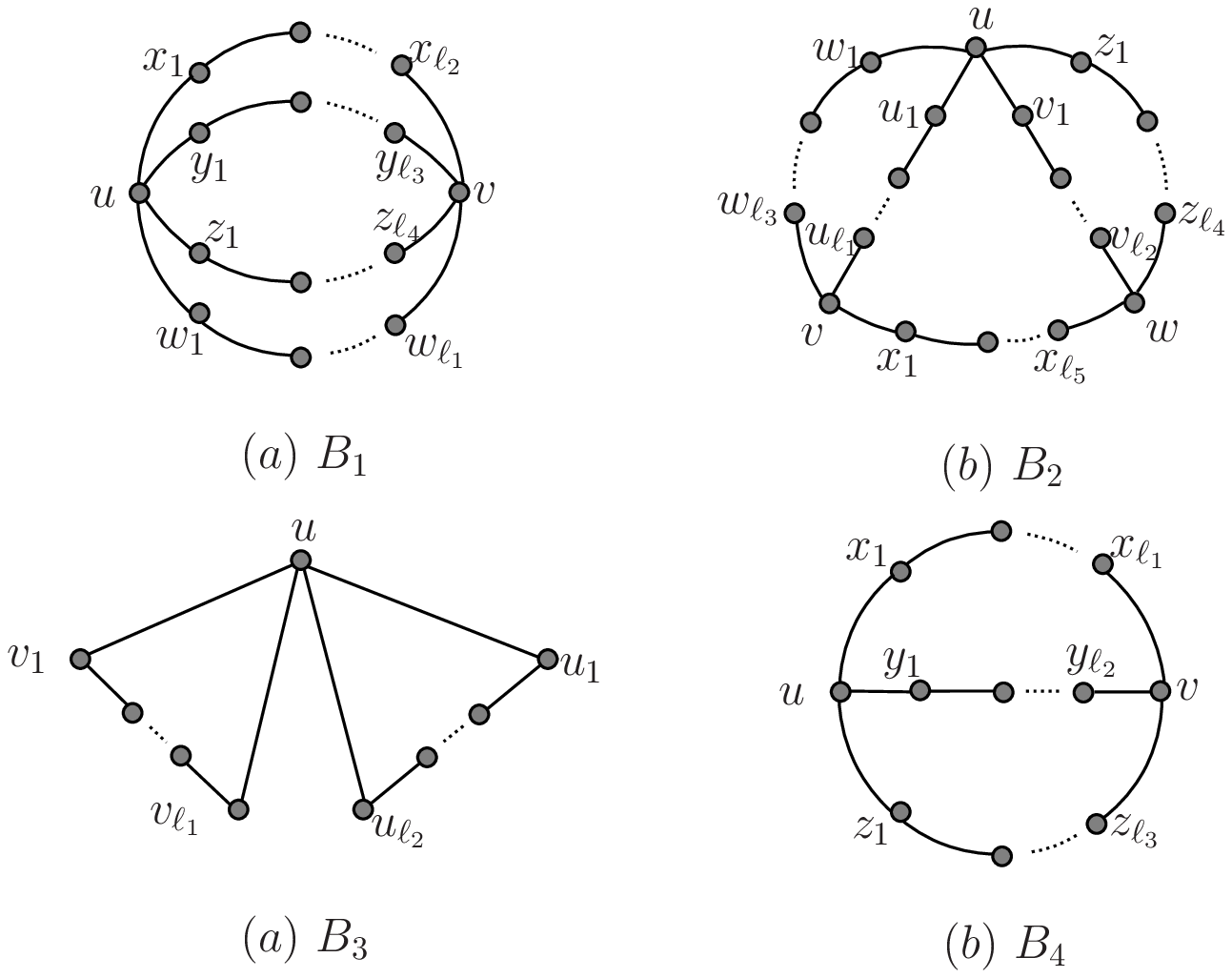}
\end{center}
\begin{center}
Figure 3: Graphs $B_1,B_2,B_3,B_4$.
\end{center}\label{fig4}
\end{figure}

Suppose $G=B_1$. Then there are two vertices of degree $4$, say
$u,v$. If $uv\in E(G)$, then $\ell_4=0$ and
$\ell_{1}+\ell_{2}+\ell_{3}$ is odd. Then
$\ell_{1},\ell_{2},\ell_{3}$ are all odd, or we can assume that
$\ell_{1},\ell_{2}$ are even and $\ell_{3}$ is odd. If
$\ell_{1},\ell_{2},\ell_{3}$ are all odd, then there exists some
$\ell_j$ such that $\ell_j\geq 3$. Without loss of generality, let
$\ell_1\geq 3$. From Theorem \ref{th2-3}, $G-w_1w_{2}$ has no FMP,
and hence $fmp(G)\leq 1$, contradicting to $fmp(G)=2$. If
$\ell_{1},\ell_{2}$ are even and $\ell_{3}$ is odd, then it follows
from Theorem \ref{th2-3} that $G-w_1w_{2}$ has no FMP, and hence
$fmp(G)\leq 1$, contradicting to $fmp(G)=2$. Suppose $uv\notin
E(G)$. Then $\ell_4\geq 1$ and each vertex of $G-u-v$ has degree $2$
and hence $G=B_1$; see Figure 3 $(a)$. Since $|V(G)|$ is odd, it
follows that $\ell_{1}+\ell_{2}+\ell_{3}+\ell_{4}$ is odd, and hence
$\ell_{1}$ or $\ell_{2}$ or $\ell_{3}$ or $\ell_{4}$ is odd. Without
loss of generality, let $\ell_{4}$ is odd. Then
$\ell_{1}+\ell_{2}+\ell_{3}$ is even, and hence $\ell_{1}$ or
$\ell_{2}$ or $\ell_{3}$ is even. Without loss of generality, let
$\ell_{1}$ is even. From Theorem \ref{th2-3}, $G-w_1w_{2}$ has no
FMP, and hence $fmp(G)\leq 1$, also a contradiction.

Suppose $G=B_2$. Then there is only one vertex of degree $4$, say
$u$. Then there are two vertices of degree $3$, say $v,w$, and each
vertex in $G-u-v-w$ has degree $2$. Therefore, $G=B_2$; see Figure 3
$(b)$. Suppose $\ell_i\geq 1$ for each $i \ (1\leq i\leq 5)$. If
$\ell_1$ and $\ell_3$ are odd, then $G-z_1z_2$ or $G-z_{\ell_4}w$
has no FMP, and hence $fmp(G)\leq 1$, a contradiction. If $\ell_1$
is even and $\ell_3$ is odd, then $G-u_1u_2$ has no FMP, and hence
$fmp(G)\leq 1$, a contradiction. If $\ell_1$ and $\ell_3$ are even,
then $\ell_2$ and $\ell_4$ are even, and hence $\ell_5$ is even. So
$G-z_1z_2$ has no FMP, and hence $fmp(G)\leq 1$, a contradiction. If
there exists some $\ell_j$ such that $\ell_j=0$, then we can get a
contradiction by similar method as the case $uv\in E(G)$ in $B_1$.

Suppose $G=B_3$. Then $\ell_1+\ell_2$ is even. If $\ell_1$ and
$\ell_2$ is even, $G-v_1v_2$ has no FMP, and hence $fmp(G)\leq 1$, a
contradiction. If $\ell_1$ and $\ell_2$ is odd, $G-uv_1$ has no FMP,
and hence $fmp(G)\leq 1$, a contradiction.

Suppose $G=B_4$. Similarly to the proof of $B_1$, we can also get a
contradiction.

\textbf{Case 4.} $\Delta(G)=3$.

In this case, $G$ has four vertices of degree $3$ and each of the
other $n-4$ vertices has degree $2$. Then $G=D_1$ or $G=D_2$ or
$G=D_3$ or $G=D_4$; see Figure 4.

\begin{figure}[!hbpt]
\begin{center}
\includegraphics[scale=0.8]{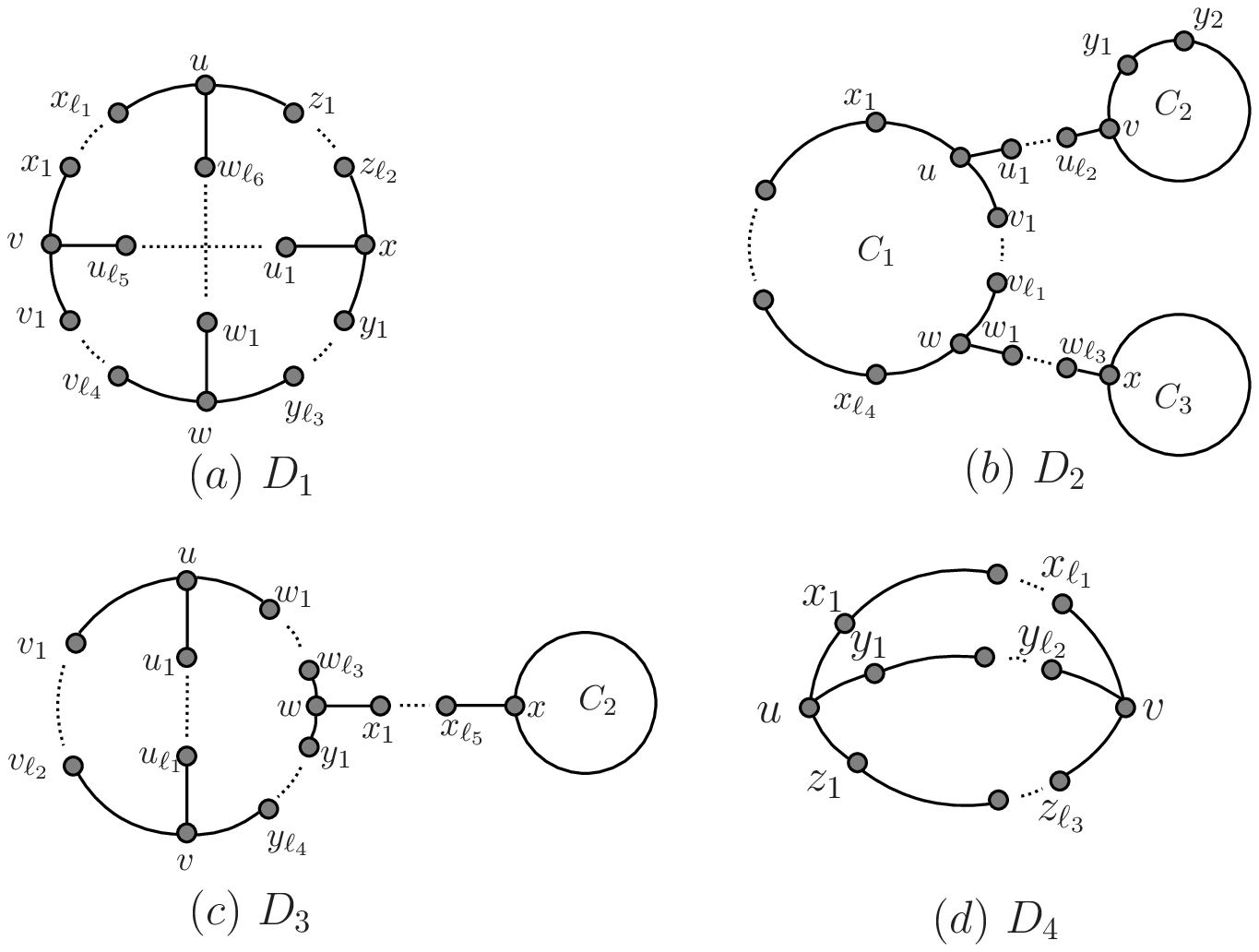}
\end{center}
\begin{center}
Figure 4: Graphs $D_1,D_2,D_3,D_4$.
\end{center}\label{fig5}
\end{figure}

Suppose $G=D_1$. If $\ell_{5}$ is odd and $\ell_6$ is even, then
$\ell_{1}+\ell_{2}+\ell_{3}+\ell_{4}$ is even.
\begin{fact}\label{Clm:3}
$(1)$ At least one in $\{\ell_i\,|\,1\leq i\leq 4\}$ is even.

$(2)$ At least one in $\{\ell_i\,|\,1\leq i\leq 4\}$ is odd.
\end{fact}

Without loss of generality, we suppose $\ell_1$ is odd. Since
$\ell_{1}+\ell_{2}+\ell_{3}+\ell_{4}$ is even and at least one of
one in $\{\ell_i\,|\,1\leq i\leq 4\}$, it follows that there exists
some $\ell_j \ (j=2,3,4)$ such that $\ell_j$ is odd. If $\ell_1$ and
$\ell_2$ is odd, then $\ell_3$ and $\ell_4$ is even, and hence
$G-xu_1$ has no FMP, implying that $fmp(G)\leq 1$, a contradiction.
If $\ell_1$ is odd, and either $\ell_3$ or $\ell_4$ is even, then
$G-xu_1$ has no FMP, implying that $fmp(G)\leq 1$, a contradiction.

Suppose that $\ell_{5}$ is odd, $\ell_6$ is odd, and
$\ell_{1}+\ell_{2}+\ell_{3}+\ell_{4}$ is odd. Then the number of odd
integer in $\{\ell_i\,|\,1\leq i\leq 4\}$ is $1$ or $3$. If the
number is $1$, say $\ell_1$, then $G-xu_1$ has no FMP, implying that
$fmp(G)\leq 1$, a contradiction. If the number is $3$, say
$\ell_1,\ell_2,\ell_3$, then $G-xu_1$ has no FMP, implying that
$fmp(G)\leq 1$, a contradiction.

Suppose that $\ell_{5}$ is even and $\ell_6$ is even. Then
$\ell_{1}+\ell_{2}+\ell_{3}+\ell_{4}$ is odd. By the above proof,
the number of odd integer in $\{\ell_i\,|\,1\leq i\leq 4\}$ is $1$
or $3$. Then $G-ux_{\ell}$ has no FMP, implying that $fmp(G)\leq 1$,
a contradiction.

Suppose $G=D_2$. Let $c_{i}$ be the length of $C_{i} \ (i=1,2,3)$,
that is, $|V(C_{i})|=c_i$.
\begin{claim}\label{Clm:5}
For $i=2,3$, $|V(C_{i})|$ are even.
\end{claim}
\noindent \emph{Proof.}
Assume, on the contrary, that $|V(C_{2})|$ is odd. Then
$G-y_{1}y_{2}$ has no FMP, and hence $fmp(G)\leq 1$, contradicting
to the fact $fmp(G)=2$. \hfill $\Diamond$

\begin{claim}\label{Clm:6}
$\ell_{2}=\ell_{3}=0$.
\end{claim}
\noindent \emph{Proof.}
Assume, on the contrary, that $\ell_{2}>0$. Then
$G-u_{\ell_{2}-1}u_{\ell_{2}}$ has no FMP, and hence $fmp(G)\leq 1$,
contradicting to the fact $fmp(G)=2$. \hfill $\Diamond$

Since $|V(G)|$ is odd, it follows that $|V(C_{1})|$ is odd. Then
$G-ux_{1}$ has no FMP, and hence $fmp(G)\leq 1$, contradicting to
the fact $fmp(G)=2$.

Suppose $G=D_3$. From the above proof, we have $\ell_{5}=0$ and
$|V(C_{2})|$ is even. Then $\ell_{1}+\ell_{2}+\ell_{3}+\ell_{4}$ is
even. If both $\ell_{1}$ and $\ell_{2}$ is even, then
$\ell_{3}+\ell_{4}$ is even, and hence $G-uw_{1}$ has no FMP, and so
$fmp(G)\leq 1$, contradicting to $fmp(G)=2$. If both $\ell_{1}$ and
$\ell_{2}$ are odd, then $\ell_{3}+\ell_{4}$ is even, and hence
$G-uv_{1}$ has no FMP, and so $fmp(G)\leq 1$, a contradiction. If
$\ell_{1}$ is even and $\ell_{2}$ is odd, then $\ell_{3}+\ell_{4}$
is odd, and hence $G-uv_{1}$ has no FMP, and hence $fmp(G)\leq 1$,
also a contradiction.

Suppose $G=D_4$. Similarly to the proof of $B_1$, we can also get a
contradiction.
\end{proof}

\subsection{Results for $f(n,k)$}

Next, we give the exact value of $f(n,k)$.
\begin{theorem}\label{thm4-2}
Let $n,k$ be two positive integers. If $1\leq k\leq n-1$, then
$$
f(n,k)=\binom{n-1}{2}+k.
$$
\end{theorem}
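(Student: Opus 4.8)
The plan is to prove the two matching inequalities $f(n,k)\le \binom{n-1}{2}+k$ and $f(n,k)\ge \binom{n-1}{2}+k$ separately: the first quantifies over all connected host graphs, while the second exhibits one extremal graph realizing the threshold minus one.

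For the upper bound I would isolate the following counting lemma, which is the engine of the whole argument: \emph{every graph $H$ on $n$ vertices with $|E(H)|\ge \binom{n-1}{2}+1$ has a fractional perfect matching.} To prove it, suppose not; by Theorem~\ref{th2-2} there is a set $S$ with $i(H-S)>|S|$. Writing $s=|S|$ and letting $I$ be the $i\ge s+1$ vertices isolated in $H-S$, every edge of $H$ lies either inside the $n-i$ vertices of $V(H)\setminus I$ or between $I$ and $S$, so $|E(H)|\le \binom{n-i}{2}+is$. Since this bound increases in $s$ and $s\le i-1$, it suffices to check $\binom{n-i}{2}+i(i-1)\le \binom{n-1}{2}$; a short computation rewrites the difference as $\tfrac12(i-1)(2n-2-3i)$, which is nonnegative throughout the feasible range $1\le i\le (n+1)/2$ once $n$ is not too small. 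This contradicts $|E(H)|\ge \binom{n-1}{2}+1$. Granting the lemma, if $G$ is connected with $|E(G)|\ge \binom{n-1}{2}+k$, then deleting any edge set $T$ with $|T|\le k-1$ leaves $|E(G-T)|\ge \binom{n-1}{2}+1$, so $G-T$ still has a fractional perfect matching. Hence no set of at most $k-1$ edges is an FMP set, so $fmp(G)\ge k$, and therefore $f(n,k)\le \binom{n-1}{2}+k$.

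For the lower bound I would produce a single graph with $\binom{n-1}{2}+k-1$ edges and $fmp<k$, which forces $f(n,k)\ge \binom{n-1}{2}+k$. Take $G^\ast=K_{n-1}$ together with one extra vertex $v$ joined to exactly $k-1$ vertices of the clique. Then $|E(G^\ast)|=\binom{n-1}{2}+(k-1)$ and $\deg_{G^\ast}(v)=k-1=\delta(G^\ast)$ (using $k\le n-1$). Deleting the $k-1$ edges incident with $v$ isolates $v$, and an isolated vertex witnesses $i(G^\ast-T)>0=|\emptyset|$ in Theorem~\ref{th2-2}, so the resulting graph has no fractional perfect matching; thus $fmp(G^\ast)\le k-1<k$. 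Consequently $\binom{n-1}{2}+k-1$ edges do not guarantee $fmp\ge k$, giving the desired lower bound.

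The main obstacle is twofold. First, the counting lemma must be pushed to the exact constant $\binom{n-1}{2}$: the extremal configuration is $K_{n-1}$ plus an isolated vertex, so the inequality $\binom{n-i}{2}+is\le \binom{n-1}{2}$ is tight at $i=1$, and I must confirm that the factor $(i-1)(2n-2-3i)$ stays nonnegative on the whole feasible interval, checking the finitely many small-$n$ exceptions by hand. Second, one must reconcile the lower-bound construction with the connectivity hypothesis: for $k\ge 2$ the graph $G^\ast$ is connected, but for $k=1$ it degenerates to $K_{n-1}\cup K_1$, which is disconnected, so the $k=1$ endpoint needs separate attention (either by reading the extremal family over all graphs of order $n$, or by verifying directly that the connected optimum still matches the formula). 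Handling this boundary case cleanly is the most delicate part of the write-up.
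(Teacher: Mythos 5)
Your lower bound is exactly the paper's: $K_{n-1}$ plus a vertex of degree $k-1$, with the same isolated-vertex argument, so there is nothing to compare there. Your upper bound takes a genuinely different route: the paper just counts complements ($|E(\overline{G})|\le n-1-k$, so after deleting $k-1$ further edges $\overline{G-F}$ has at most $n-2$ edges, whence $G-F$ has a fractional perfect matching because $fmp(K_n)=n-1$ by Theorem~\ref{th1-3}), whereas you prove the underlying fact from scratch via Theorem~\ref{th2-2}. That plan is sound, but your counting lemma has a concrete hole. From $|E(H)|\le\binom{n-i}{2}+is$ you substitute $s=i-1$ and then claim the resulting inequality need only be checked on ``the feasible range $1\le i\le(n+1)/2$''. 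Nothing confines $i$ to that range: the constraints are $s\le i-1$ and $s\le n-i$, and $i$ itself can be as large as $n-1$ (take $H$ a star and $S$ its center). For $i>\frac{2n-2}{3}$ the inequality you reduce to, $\binom{n-i}{2}+i(i-1)\le\binom{n-1}{2}$, is simply false, since the factor $2n-2-3i$ is negative, so the uncovered regime cannot be dismissed; it must be handled with the other constraint. For $i>\frac{n+1}{2}$ one has $s\le n-i$, hence $|E(H)|\le\binom{n-i}{2}+i(n-i)=\tfrac12\,j(2n-j-1)$ with $j=n-i\le\frac{n-1}{2}$, and this is at most $\binom{n-1}{2}$ for all such $j$ once $n\ge 7$. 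With that case added, and with $n\ge 7$ assumed throughout (your ``small $n$'' caveat is not removable: $P_3$ has $\binom{2}{2}+1=2$ edges and no fractional perfect matching, and note the paper itself needs $n\ge 7$ through Theorem~\ref{th1-3}), your lemma and hence your upper bound become correct.

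Your worry about the $k=1$ endpoint is not a defect of your write-up but a genuine problem you have correctly detected; the paper suffers from it silently, since its assertion that $G_k$ is connected is false when $k=1$. Moreover, your second suggested repair (``verify that the connected optimum still matches the formula'') cannot succeed for $n\ge 8$: by the corrected lemma, every graph with more than $\binom{n-1}{2}$ edges has a fractional perfect matching, and a graph with exactly $\binom{n-1}{2}$ edges and none must attain equality in every counting step, which forces $i=1$ and $s=0$, i.e., the graph $K_{n-1}\cup K_1$, which is disconnected. Hence every \emph{connected} graph on $n\ge 8$ vertices with at least $\binom{n-1}{2}$ edges has a fractional perfect matching, so $f(n,1)\le\binom{n-1}{2}<\binom{n-1}{2}+1$, and the stated formula actually fails at $k=1$ under the connectivity convention of Problem 1. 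The clean resolution is to restrict the range to $2\le k\le n-1$ (where both the paper's argument and your patched argument go through), rather than to try to rescue $k=1$.
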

\begin{proof}
 Let $G$ be a graph with $n$ vertices such that
$|E(G)|\geq \binom{n-1}{2}+k$. Clearly, $|E(\overline{G})|\leq
n-k-1$. For any $F\subseteq E(G)$ and $|F|=k-1$, we have
$|E(\overline{G-F})|\leq n-2$. Since $fmp(K_{n})=n-1$, it follows
that $G-F$ has a fractional perfect matching, and hence $fmp(G)\geq
k$. So $f(n,k)\leq \binom{n-1}{2}+k$.
 To show $f(n,k)\geq
\binom{n-1}{2}+k$, we construct $G_k$ obtained from $K_{n-1}$ by
adding a new vertex $v$ and then adding $(k-1)$ edges from $v$ to
$K_{n-1}$. It is clear that $G_k$ is a connected graph on $n$
vertices, $|E(G_k)|=\binom{n-1}{2}+k-1$, and $mp(G_k)<k$. So
$f(n,k)=\binom{n-1}{2}+k$.
\end{proof}

%%%%%%%%%%%%%%%%%%%%%%%%%%%%%%%%%%%%%%%%%%%%%%%%%%%%%%%%%%%%%%%%%%%%%%%%%%%%%%%%%%%%%%%%%%%%%%
%%%%%%%%%%%%%%%%%%%%%%%%%%%%%%%%%%%%%%%%%%%%%%%%%%%%%%%%%%%%%%%%%%%%%%%%%%%%%%%%%%%%%%%%%%%%%%
%%%%%%%%%%%%%%%%%%%%%%%%%%%%%%%%%%%%%%%%%%%%%%%%%%%%%%%%%%%%%%%%%%%%%%%%%%%%%%%%%%%%%%%%%%%%%%


\begin{thebibliography}{1}




\bibitem{Aouchiche} M. Aouchiche, P. Hansen, A survey of Nordhaus--Gaddum
        type relations, \emph{Discrete Appl. Math.} 161 (2013), 466--546.

\bibitem{Berge}
C. Berge, Sur le couplage maximun d'un graphe,
\emph{Math\'{e}matique} 247 (1958), 258--259.


\bibitem{Bondy} J. Bondy, U. Murty, \emph{Graph Theory},
GTM 244, Springer, 2008.

\bibitem{BrighamHVY}
R.C. Brigham, F. Harary, E.C. Violin, J. Yellen, Perfect-matching
preclusion, \emph{Congr. Numer.} 174 (2005), 185--192.

\bibitem{CaiLiWu}
Q. Cai, X. Li, D. Wu, Some extremal results on the colorful
monochromatic vertex-connectivity of a graph, \emph{J. Combin.
Optim.} 35(4) (2018), 1300--1311.

\bibitem{ChengCM}
E. Cheng, R. Connolly, C. Melekian, Matching  preclusion and
conditional matching preclusion problems for the folded Petersen
cube, \emph{Theor. Comput. Sci.} 576 (2010), 30--44.

\bibitem{ChengHJL}
E. Cheng, P. Hu, R. Jia, L. Lipt\'{a}k, Matching preclusion and
conditional matching preclusion for bipartite interconnection
networks I: Sufficient conditions, \emph{Networks} 59 (2012),
349--356.

\bibitem{ChengHJLa}
E. Cheng, P. Hu, R. Jia, L. Lipt\'{a}k, Matching preclusion and
conditional matching preclusion for bipartite interconnection
networks II: Cayley graphs generated by transposition trees and
hyper-stars, \emph{Networks} 59 (2012), 357--364.

\bibitem{ChengLLL}
E. Cheng, L. Lesniak, M.J. Lipman, L. Lipt\'{a}k, Matching
preclusion for alternating group graphs and their generalizations,
\emph{Inter. J. Found. Comput. Sci.} 19 (2008), 1413--1437.




\bibitem{ChengL}
E. Cheng, L. Lipt\'{a}k, Matching preclusion for some
interconnection networks, \emph{Networks} 50 (2007), 173--180.

\bibitem{ChengL2}
E. Cheng, L. Lipt\'{a}k, Matching preclusion and conditional
matching preclusion problems for tori and related Cartersian
products, \emph{Discrete Appl. Math.} 160 (2012), 1936--1954.



\bibitem{ChengLJ}
E. Cheng, L. Lipt\'{a}k, D. Sherman, Matching preclusion for the
$(n,k)$-bubble-sort graphs, \emph{Int. J. Comput. Math.} 87 (2010),
2408--2418.

\bibitem{ChengLJa}
E. Cheng, D. Lu, R. Jia, Matching preclusion and conditional
matching preclusion for augmented cubes, \emph{JOIN} 11 (2010),
35--60.


\bibitem{HsuL}
L.-H. Hsu, C.-K. Lin, \emph{Graph Theory and Interconnection
Networks}, CRC Press, 2009.

\bibitem{JiangLiZhang}
H. Jiang, X. Li, Y. Zhang, Erdos-Gallai-type results for total
monochromatic connection of graphs, \emph{Discuss. Math. Graph
Theory}, in press.

\bibitem{LacroixMMP}
M. Lacroix, A.R. Mahjoub, S. Martin, C. Picouleau, On the
NP-completeness of the perfect matching free subgraph problem,
\emph{Theor. Comput. Sci.} 423 (2009), 25--29.

\bibitem{LiuLiu}
Y. Liu, W. Liu, Fractional matching preclusion of graphs, \emph{J.
Comb. Optim.} 34 (2017), 522--533.

\bibitem{MaMaoChengMelekian}
T. Ma, Y. Mao, E. Cheng, C. Melekian, Fractional matching preclusion
for generalized augmented cubes, {\it Discrete Math. Theor. Comput.
Sci.} 21(4) (2019), \#6.

\bibitem{MaMaoChengWang}
T. Ma, Y. Mao, E. Cheng, J. Wang, Fractional matching preclusion for
arrangement graphs, \emph{Discrete Appl. Math.}, in press.

\bibitem{MWCM}
Y. Mao, Z. Wang, E. Cheng, C. Melekian, Strong matching preclusion
number of graphs, {\it Theor. Comput. Sci.} 713 (2018), 11--20.


\bibitem{Ore}
O. Ore, Note on Hamiltonian circuits, \emph{Amer. Math. Monthly} 67
(1960), 55.

\bibitem{Park}
J.-H. Park, Matching preclusion problem in restricted HL-graphs and
recursive circulant $g(2m, 4)$, \emph{Journal of KIISE} 35 (2008),
60--65.

\bibitem{ParkI}
J.-H. Park, I. Ihm, Strong matching preclusion, \emph{Theor. Comput.
Sci.} 412 (2011), 6409--6419.

\bibitem{ParkI2}
J.-H. Park, I. Ihm, Strong matching preclusion under the conditional
fault model, \emph{Discrete Appl. Math.} 161 (2013), 1093--1105.

\bibitem{ParkS}
J.-H. Park, S.H. Son, Conditional matching preclusion for
hypercube-like interconnection  networks, \emph{Theor. Comput. Sci.}
410 (2009), 2362--2640.

\bibitem{ScheinermanUllman}
E.R. Scheinerman, D.H. Ullman, Fractional graph theory,
Wiley, New York, 1997.

\bibitem{Schiermeyer}
Ingo Schiermeyer, On minimally rainbow $k$-connected graphs.
\emph{Discrete Appl. Math.} 161(4-5) (2013), 702--705.

\bibitem{Tutte}
W.T. Tutte, The factorization of linear graphs, \emph{J. London
Math. Soc.} 22 (1947), 107--111.


\bibitem{WangWLL}
S. Wang, R. Wang, S. Lin, J. Li, Matching preclusion for $k$-ary
$n$-cubes, \emph{Discrete Appl. Math.} 158 (2010), 2066--2070.

\bibitem{WangFZ}
S. Wang, K. Feng, G. Zhang, Strong matching preclusion for $k$-ary
$n$-cubes, \emph{Discrete Appl. Math.} 161 (2013), 3054--3062.

\bibitem{WangMaoChengZou}
Z. Wang, Y. Mao, E. Cheng, J. Zou, Matching preclusion number of
graphs, \emph{Theor. Comput. Sci}. 759 (2019), 61--71.

\bibitem{WangMelekianChengMao}
Z. Wang, C. Melekian, E. Cheng, Y. Mao, Matching preclusion number
in product graphs, \emph{Theor. Comput. Sci.} 755 (2019), 38--47.
\end{thebibliography}
\end{document}